\tikzset{
  tag/.style={
    rectangle,
    draw=black, thick,
    text width=4em,
    minimum height=3em,
    text centered},
}
\def\interleave{|\kern-.25ex|\kern-.25ex|}
\def\interleavesub{|\kern-.15ex|\kern-.15ex|}
\newcommand{\nNorm}[1]{\left|\kern-.25ex\left|\kern-.25ex\left| {#1}\right|\kern-.25ex\right|\kern-.25ex\right|}
\numberwithin{equation}{section}
\theoremstyle{plain}
\newtheorem{theorem}{Theorem}[section]
\newtheorem{corollary}{Corollary}[section]
\newtheorem{lemma}{Lemma}[section]
\newtheoremstyle{remark}{\topsep}{\topsep}%
     {\normalfont}
     {}           
     {\bfseries}  
     {.}          
     {.5em}       
     {\thmname{#1}\thmnumber{ #2}\thmnote{ #3}}
\theoremstyle{remark}
\long\def\comment#1{}
\def\reals{{\mathbb R}}
\def\P{{\mathbb P}}
\def\supp{\mathop{\text{supp}\kern.2ex}}
\def\argmin{\mathop{\text{\rm arg\,min}}}
\def\argmax{\mathop{\text{\rm arg\,max}}}
\let\hat\widehat
\let\tilde\widetilde
\let\hat\widehat
\let\tilde\widetilde
\def\given{{\,|\,}}
\def\1{{(1)}}
\def\2{{(2)}}
\def\M{{\mathcal{M}}}
\long\def\comment#1{}
\long\def\comment#1{}
\def\reals{{\mathbb R}}
\def\P{{\mathbb P}}
\def\supp{\mathop{\text{supp}\kern.2ex}}
\def\argmin{\mathop{\text{\rm arg\,min}}}
\def\argmax{\mathop{\text{\rm arg\,max}}}
\let\tilde\widetilde
\let\hat\widehat
\let\tilde\widetilde
\def\given{{\,|\,}}
\def\1{{(1)}}
\def\2{{(2)}}
\long\def\comment#1{}
\def\threebars{\mbox{$|\kern-.25ex|\kern-.25ex|$}}
\def\M{\mathbb{M}}
\def\F{\mathbb{F}}
\begin{document}

\begin{frontmatter}
\centerline{\large\bf Quantized Estimation of Gaussian Sequence Models
  in Euclidean Balls}
\runtitle{Quantized Estimation of Gaussian Sequences}

\begin{aug}
\vskip10pt
\author{\fnms{Yuancheng} \snm{Zhu}\ead[label=e1]{yuancheng@stanford.edu}}
\and
\author{\fnms{John} \snm{Lafferty}\ead[label=e4]{lafferty@galton.uchicago.edu}}
\address{
\vskip1pt
\begin{tabular}{c}
Department of Statistics \\
The University of Chicago 
\end{tabular}
\\[10pt]
\today\\[5pt]
}
\end{aug}

\begin{abstract}

  A central result in statistical theory is Pinsker's theorem, which
  characterizes the minimax rate in the normal means model of
  nonparametric estimation.  In this paper, we present an extension to
  Pinsker's theorem where estimation is carried out under storage
  or communication constraints.  In particular, we place limits on the number of bits
  used to encode an estimator, and analyze the excess risk in terms of
  this constraint, the signal size, and the noise level.  We give
  sharp upper and lower bounds for the case of a Euclidean ball, which
  establishes the Pareto-optimal minimax tradeoff between storage and
  risk in this setting.

\end{abstract}

\begin{keyword}
\kwd{nonparametric estimation}
\kwd{minimax bounds}
\kwd{rate distortion theory}
\kwd{constrained estimation}
\end{keyword}

\vskip20pt
\end{frontmatter}

\maketitle

\section{Introduction}

Classical statistical theory studies the rate at which the error
in an estimation problem decreases as the sample size 
increases.    Methodology for a particular problem is developed to
make estimation efficient, and lower bounds establish
how quickly the error can decrease in principle.  Asymptotically
matching upper and lower bounds together yield the minimax
rate of convergence
$$ R_n(\F) = \inf_{\hat f} \sup_{f\in \F}  R(\hat f, f).$$
This is the worst-case error in estimating an element of a model class $\F$,
where $R(\hat f, f)$ is the risk or expected loss, and $\hat f$ 
is an estimator constructed on a data sample of size $n$.  The
corresponding sample complexity of the estimation problem is 
$n(\epsilon, \F) = \min\{n : R_n(\F) < \epsilon\}$.

In the classical setting, the infimum is over all
estimators.  In contemporary settings, it is increasingly of interest to understand how error depends
on computation.  For instance, when the data are high dimensional and the sample size
is large, constructing the estimator using standard
methods may be computationally prohibitive.   The use of
heuristics and approximation algorithms may make computation more efficient,
but it is important to understand the loss in statistical efficiency that
this incurs.   
In the minimax framework, this can be formulated by placing
computational constraints on the estimator:
$$ R_n(\F,B_n) = \inf_{\hat f: C(\hat f)\leq B_n} \sup_{f\in \F} R(\hat f, f).$$
Here $C(\hat f) \leq B_n$ indicates that the computation $C(\hat f)$
used to construct $\hat f$ is required to fall within a ``computational budget'' $B_n$.
Minimax lower bounds on the risk as a function of the computational
budget thus determine a feasible region for computation-constrained 
estimation, and a Pareto-optimal tradeoff for error versus computation.

One important measure of computation is the number of floating point
operations, or the running time of an algorithm.   Chandrasekaran and
Jordan \cite{chandrasekaran:13}
have studied upper bounds for
statistical estimation with computational constraints of this form
in the normal means model.  However, useful lower bounds are elusive.
This is due to 
the difficult nature of establishing tight lower bounds for this model
of computation in the polynomial hierarchy, apart from any statistical concerns.
%
%
Another important measure of computation is storage, or the space
used by a procedure. In particular, we may wish to limit the number of bits used to represent
our estimator $\hat f$.  The question then becomes, how 
does the excess risk depend on the budget $B_n$ imposed
on the number of bits $C(\hat f)$ used to 
encode the estimator?

This problem is naturally motivated by certain applications.
For instance, the Kepler telescope collects flux data for
approximately 150,000 stars \cite{2041-8205-713-2-L87}.  The central statistical task is
to estimate the lightcurve of each star nonparametrically, in order to denoise
and detect planet transits.   If this estimation is done 
on board the telescope, the estimated function values may need to be
sent back to earth for further analysis.  To limit communication
costs, the estimates can be quantized.  The fundamental question is,
what is lost in terms of statistical risk in quantizing the estimates?
Or, in a cloud computing environment (such as Amazon EC2), a large
number of nonparametric estimates might be constructed 
over a cluster of compute nodes and then stored (for example in
Amazon S3) for later analysis.  To limit the storage costs,
which could dominate the compute costs in many scenarios, it is of
interest to quantize the estimates.  How much is lost in terms of
risk, in principle, by using different levels of quantization?

With such applications as motivation, we address in this paper the
problem of risk-storage tradeoffs in the normal means model of
nonparametric estimation.  The normal means model is a centerpiece of
nonparametric estimation.  It arises naturally when representing an
estimator in terms of an orthogonal basis \cite{johnstone2002function,tsybakov:2008}.  Our
main result is a sharp characterization of the Pareto-optimal tradeoff
curve for quantized estimation of a normal means vector, in the
minimax sense.  We consider the case of a Euclidean ball of
unknown radius in $\reals^n$.  This case exhibits many of the key
technical challenges that arise in nonparametric estimation over
richer spaces, including the Stein phenomenon and the problem of
adaptivity.  

As will be apparent to the reader, the problem we consider is
intimately related to classical rate distortion theory \cite{gallager:1968}.
Indeed, our results require a marriage of minimax theory and rate
distortion ideas.  We thus build on the fundamental
connection between function estimation and lossy source coding that
was elucidated in Donoho's 1998 Wald Lectures \cite{donoho2000wald}.  This connection
can also be used to advantage for practical estimation schemes.  
As we discuss further below, recent advances on
computationally efficient, near-optimal lossy compression using sparse
regression algorithms  \cite{venkataramanan2013lossy} can perhaps be leveraged for quantized
nonparametric estimation.

In the following section, we present relevant background
and give a detailed statement of our results.  In Section~\ref{sec:main}
we sketch a proof of our main result
on the excess risk for the Euclidean ball case.  Section~\ref{sec:sims}
presents simulations to illustrate our theoretical
analyses. Section~\ref{sec:discuss} discusses related work, and outlines
future directions that our results suggest.

%
%
%
%

\def\M{{\mathcal M}}

\section{Background and problem formulation}
\label{sec:problem}

In this section we briefly review the essential elements of 
rate-distortion theory and minimax theory, to establish notation.
We then state our main result, which bridges
these classical theories.

\label{sec:ratedistortion}

In the rate-distortion setting we have a source that produces a sequence $X^n=(X_1,X_2,\dots, X_n)$,
each component of which is independent and identically distributed as $\mathcal N(0,\sigma^2)$. 
The goal is to transmit a realization from this sequence of random variables using a
fixed number of bits, in such a way that results in the
minimal expected distortion with respect to the original data $X^n$.
Suppose that we are allowed to use a total budget of $nB$ bits, so that the
average number of bits per variable is $B$, which is referred to as the \textit{rate}.
To transmit or store the data, the \emph{encoder} describes the source sequence $X^n$ by an index $\phi_n(X^n)$, where
\[
\phi_n:\mathbb R^n\to \{1,2,\dots,2^{nB}\} \equiv {\mathbb C}(B)
\]
is the \emph{encoding function}. The $nB$-bit index is then transmitted or stored without loss. 
A \emph{decoder}, when receiving or retrieving the data, represents $X^n$ by an estimate $\check X^n$ based on the index using a \emph{decoding function}
\[
\psi_n:\{1,2,\dots,2^{nB}\}\to\mathbb R^n.
\]
The image of the decoding function $\psi_n$ is called the \emph{codebook}, 
which is a discrete set in $\mathbb R^n$ with cardinality no larger than $2^{nB}$. 
The process is illustrated in Figure \ref{fig:datatrans}, and
variously referred to as source coding, lossy compression, or quantization.
We call the pair of encoding and decoding functions $Q_n=(\phi_n,\psi_n)$ an $(n,B)$-\emph{rate distortion code}. 
We will also use $Q_n$ to denote the composition of the two functions, i.e.,
$Q_n(\cdot)=\psi_n(\phi_n(\cdot))$.

\begin{figure}[t]
\vskip-5pt
\begin{center}
\tikzset{>=stealth',every on chain/.append style={join},
         every join/.style={->}}
\begin{tikzpicture}
\matrix[row sep=10mm,column sep=4mm ] {
&&&\node(source) {$X^n$}; 
&& \node [tag] (encoder) {Encoder $\phi_n$}; 
&&&&&&& \node[tag] (decoder) {Decoder $\psi_n$};
&& \node(output) {$\check X^n=\psi_n\left(\phi_n(X^n)\right)$}; \\
};
\begin{scope}[every node/.style={midway,auto,font=\scriptsize}]
\draw [shorten >=5pt,->] (source) -- (encoder);
\draw [shorten <=5pt, shorten >=5pt, ->] (encoder) -- node{$\phi_n(X^n)\in{\mathbb C}(B)$} (decoder);
\draw [shorten <=5pt, ->] (decoder) -- (output);
\end{scope}
\end{tikzpicture}
\end{center}
\begin{center}
\tikzset{>=stealth',every on chain/.append style={join},
         every join/.style={->}}
\begin{tikzpicture}
\hskip-5pt\matrix[row sep=10mm,column sep=4mm ] {
\node(model) {$\theta^n$}; 
&& \node(source) {$X^n$}; 
&& \node [tag] (encoder) {Encoder $\phi_n$}; 
&&&&&&& \node[tag] (decoder) {Decoder $\psi_n$};
&& \node(output) {$\check \theta^n=\psi_n\left(\phi_n(X^n)\right)$}; \\
};
\begin{scope}[every node/.style={midway,auto,font=\scriptsize}]
\draw [shorten >=5pt,->] (model) -- (source);
\draw [shorten >=5pt,->] (source) -- (encoder);
\draw [shorten <=5pt, shorten >=5pt, ->] (encoder) -- node{$\phi_n(X^n)\in{\mathbb C}(B)$} (decoder);
\draw [shorten <=5pt, ->] (decoder) -- (output);
\end{scope}
\end{tikzpicture}
\end{center}
\vskip-5pt
\caption{Encoding and decoding process for lossy compression (top) and
  quantized estimation (bottom).  For quantized estimation, the
  model (mean vector) $\theta^n$ is deterministic, not
  random.}\label{fig:datatrans}
\vskip-5pt
\end{figure}
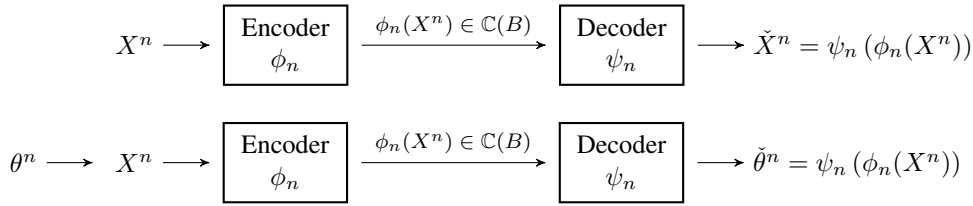


A \emph{distortion measure}, or a \emph{loss function}, $d:\mathbb R\times\mathbb R\to\mathbb R^+$ 
is used to evaluate the performance of the above coding and transmission process. 
In this paper, we will use the squared loss $d(X_i,\check X_i)=(X_i-\check X_i)^2$. 
The distortion between two sequences $X^n$ and $\check X^n$ is then defined by $d_n(X^n,\check X^n)=\frac{1}{n}\sum_{i=1}^n(X_i-\check X_i)^2$, the average of the per observation distortions. We drop the subscript $n$ in $d$ when it is clear from the context. The \emph{distortion}, or \emph{risk}, for a $(n,B)$-rate distortion code $Q_n$ is defined as the expected loss $\mathbb E\,  d\left(X^n,Q_n(X^n)\right)$. Denoting by $\mathcal Q_{n,B}$ the set of all $(n,B)$-rate distortion codes, the \emph{distortion rate function} is defined as
\[
R(B,\sigma)=\liminf_{n\to\infty}\inf_{Q_n\in\mathcal Q_{n,B}}\mathbb E\,  d\left(X^n,Q_n(X^n)\right).
\]
This distortion rate function depends on the rate $B$ as well as the
source distribution. For the i.i.d. $\mathcal N(0,\sigma^2)$ source,
according to the well-known rate distortion theorem \cite{gallager:1968},
\[
R(B,\sigma)=\sigma^22^{-2B}.
\]
When $B$ is zero, meaning no information gets encoded at all, this
bound becomes $\sigma^2$, which is the expected loss when each random
variable is represented by its mean. As $B$ approaches infinity, the
distortion goes to zero.


The previous discussion assumes the source random variables are
independent and follow a common distribution $\mathcal
N(0,\sigma^2)$. The goal is to minimize the expected distortion in the
reconstruction of $X^n$ after
transmitting or storing the data under a communication constraint. Now
suppose that
\[
X_i\overset{\text{ind.}}{\sim}\mathcal N(\theta_i,\sigma^2)\ \text{ for $i=1,2,\dots,n$}.
\]
We assume the variance $\sigma^2$ is known and the means $\theta^n=(\theta_1,\dots,\theta_n)$ are unknown.
Suppose, furthermore, 
that instead of trying to minimize the recovery distortion $d(X^n,\check X^n)$, 
we want to estimate the means with a risk as small as possible, but
again using a budget of $B$ bits per index.

Without the communication constraint, this problem has been very well
studied \cite{pinsker1980optimal,nussbaum1999minimax}. Let
$\hat\theta(X^n) \equiv \hat\theta^n= (\hat\theta_1,\dots,\hat\theta_n)$ denote an estimator
of the true mean $\theta^n$. For a parameter space
$\Theta_n\subset\mathbb R^n$, the minimax risk over $\Theta_n$ is
defined as
\[
\inf_{\hat\theta^n}\sup_{\theta^n\in\Theta_n}\mathbb E\,d(\theta^n,\hat\theta^n)=\inf_{\hat\theta^n}\sup_{\theta^n\in\Theta_n}\mathbb E\,\frac{1}{n}\sum_{i=1}^n(\theta_i-\hat\theta_i)^2.
\]
For the $L_2$ ball of radius $c$,
\begin{equation}
\Theta_n(c)=\Bigl\{(\theta_1,\dots,\theta_n):\frac{1}{n}\sum_{i=1}^n\theta_i^2\leq c^2\Bigr\},\label{l2ball}
\end{equation}
Pinsker's theorem gives the exact, limiting form of the minimax risk
\[
\liminf_{n\to\infty}\inf_{\hat\theta^n}\sup_{\theta^n\in\Theta_n(c)}\mathbb E\,d(\theta^n,\hat\theta^n)=\frac{\sigma^2c^2}{\sigma^2+c^2}.
\]

To impose a communication constraint, we incorporate a variant of the source coding
scheme described above into this minimax
framework of estimation. Define a $(n,B)$-rate estimation code
$M_n=(\phi_n,\psi_n)$, as a pair of encoding and decoding functions,
as before.  The encoding function $\phi_n: \reals^n \rightarrow \{1,2,\ldots,
2^{nB}\}$ is a mapping from observations $X^n$ to an index set.
The decoding function is a mapping from indices to models $\check \theta^n \in\reals^n$.
We write the composition of the encoder and decoder as
$M_n(X^n) = \psi_n(\phi_n(X^n)) = \check\theta^n$,
which we call a \textit{quantized estimator}.
Denoting by $\mathcal M_{n,B}$ the set of all $(n,B)$-rate estimation codes,
we then define the \textit{quantized minimax risk} as
\[
R_n(B,\sigma,\Theta_n)=\inf_{M_n\in\mathcal M_{n,B}}\sup_{\theta^n\in\Theta_n}\mathbb E\, d(\theta^n,M_n(X^n)).
\]
We will focus on the case where our parameter space is the $L_2$ ball
defined in \eqref{l2ball}, and write 
\[
R_n(B,\sigma,c)=R_n(B,\sigma,\Theta_n(c)).
\]
In this setting, we let $n$ go to infinity and define the asymptotic
quantized minimax risk as
\begin{equation}
R(B,\sigma,c)=\liminf_{n\to\infty}R_n(B,\sigma,c)=\liminf_{n\to\infty}\inf_{M_n\in\mathcal M_{n,B}}\sup_{\theta^n\in\Theta_n(c)}\mathbb E\, d(\theta^n,M_n(X^n)).\label{aympminimax}
\end{equation}
Note that we could estimate $\theta^n$ 
based on the quantized data $\check X^n=Q_n(X^n)$. Once again denoting
by $\mathcal Q_{n,B}$ the set of all $(n,B)$-rate distortion codes,
such an estimator is written
$\check\theta^n=\check\theta^n(Q_n(X^n))$. 
Clearly, if the decoding functions $\psi_n$ of $\mathcal Q_n$ are
injective, then this formulation is equivalent.  The quantized minimax risk
is then expressed as
\[
R_n(B,\sigma,\Theta_n)=\inf_{\check\theta^n}\inf_{Q_n\in\mathcal Q_{n,B}}\sup_{\theta^n\in\Theta_n}\mathbb E\, d(\theta^n,\check\theta^n).
\]

The many normal means problem exhibits much of the complexity and
subtlety of general nonparametric regression and density
estimation problems.  It arises naturally in the estimation of a function
expressed in terms of an orthogonal function basis
\cite{johnstone2002function,wasserman:2006}.    
Our main result
sharply characterizes the excess risk that communication constraints
impose on minimax estimation for $\Theta(c)$.


\begin{figure}[t]
\begin{center}
\ \vskip-10pt
\begin{tabular}{ll}
\hskip-10pt
\includegraphics[width=.4\textwidth]{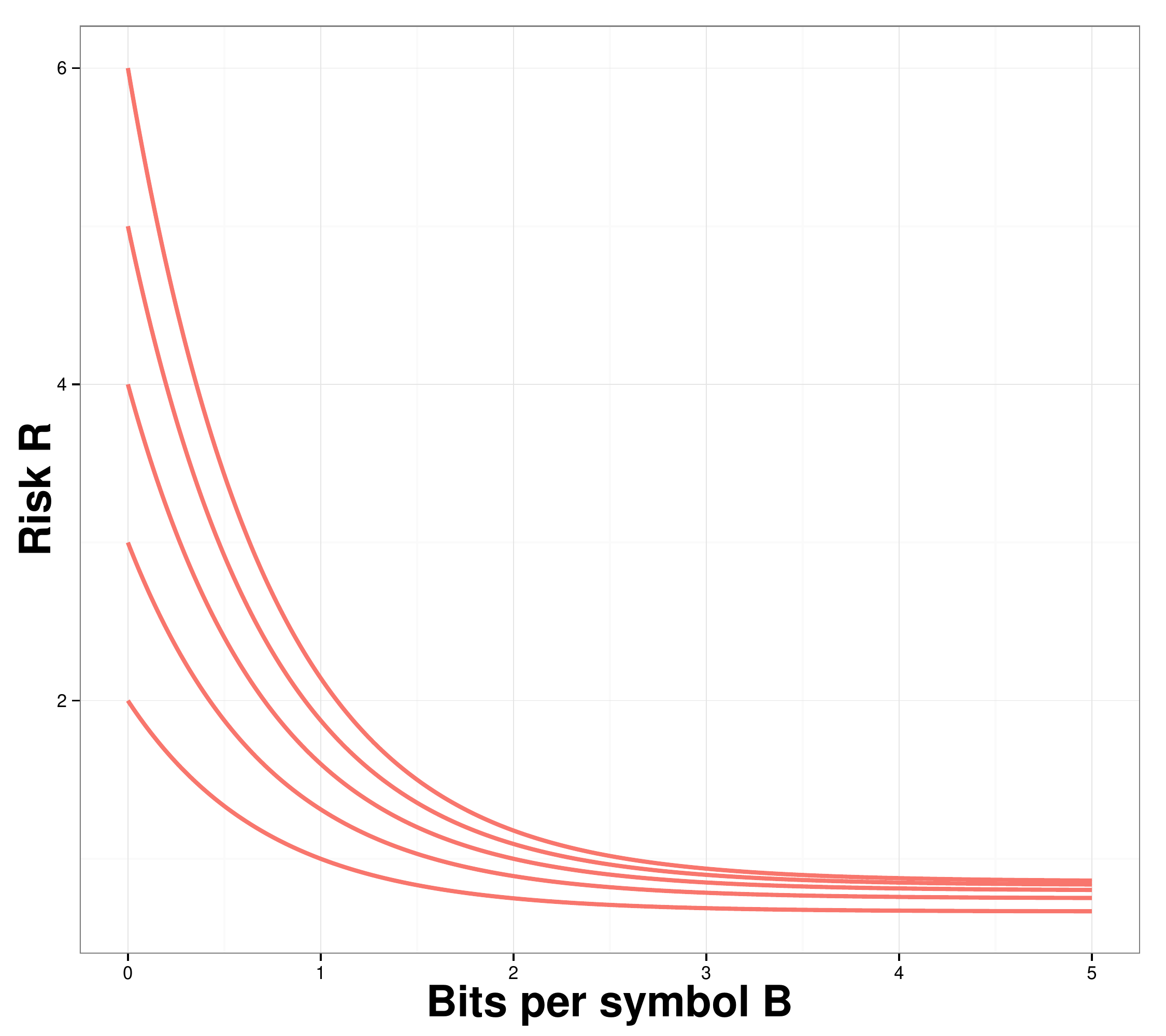} \quad & 
\quad \begin{minipage}[b]{2.5in}
\small\linespread{1.0}\selectfont
\stepcounter{figure}
Figure \arabic{figure}. Our result establishes the Pareto-optimal tradeoff 
in the nonparametric normal means problem for 
risk versus number of bits:
$$ R(\sigma^2, c^2, B) = \frac{c^2\sigma^2}{\sigma^2 + c^2} + \frac{c^4 2^{-2B}}{\sigma^2+c^2}$$
\!\!\!Curves for five signal sizes are shown, $c^2 = 2, 3, 4, 5,
6$.  The noise level is $\sigma^2=1$.  With zero bits, the
rate is $c^2$, the highest point on the risk curve.  The rate
for large $B$ approaches the Pinsker bound $\sigma^2 c^2 / (\sigma^2
+ c^2)$.
\vskip2pt{\ }
\end{minipage}
\label{tradeoff}
\\[-10pt]
\end{tabular}
\end{center}
\end{figure}

\section{Main results}
\label{sec:main}

Our first result gives a lower bound on the exact quantized asymptotic risk in terms
of $B$, $\sigma$, and $c$. 

\begin{theorem}\label{thm:lowerbound} For $B\geq 0$, $\sigma>0$ and $c>0$, the asymptotic minimax risk defined in (\ref{aympminimax}) satisfies
\begin{equation}
R(B,\sigma,c)\geq\frac{\sigma^2c^2}{\sigma^2+c^2}+\frac{c^4}{\sigma^2+c^2}2^{-2B}.\label{minimax}
\end{equation}
\end{theorem}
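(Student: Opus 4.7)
The plan is to lower-bound the minimax risk by a Bayes risk against an i.i.d.\ Gaussian prior on $\theta^n$, then split that Bayes risk via the posterior mean into a Pinsker-type term and a lossy-compression term to which the Shannon rate-distortion converse for a Gaussian source applies. Fix $\tau^2 < c^2$ and let $\pi$ be the law of $n$ i.i.d.\ $\mathcal N(0,\tau^2)$ variables. Because $\Theta_n(c)$ is convex, one may assume without loss of generality that every code $M_n$ produces outputs in $\Theta_n(c)$ (postcomposing with Euclidean projection never increases the loss for $\theta^n\in\Theta_n(c)$); in particular $\|\check\theta^n\|^2\leq nc^2$.

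Under $\pi$, the posterior of $\theta_i$ depends on $X^n$ only through $X_i$, with posterior mean $\mu_i = \frac{\tau^2}{\tau^2+\sigma^2}X_i$ and posterior variance $\tau^2\sigma^2/(\tau^2+\sigma^2)$. For any quantized estimator $\check\theta^n = M_n(X^n)$, which is $\sigma(X^n)$-measurable, the fact that $\mathbb E_\pi(\theta_i-\mu_i\mid X^n)=0$ yields the orthogonal decomposition
\[
\mathbb E_\pi\|\theta^n-\check\theta^n\|^2 \;=\; n\cdot\frac{\tau^2\sigma^2}{\tau^2+\sigma^2} \;+\; \mathbb E_\pi\|\mu^n-\check\theta^n\|^2.
\]
Marginally, $\mu^n$ is i.i.d.\ $\mathcal N(0,v)$ with $v=\tau^4/(\tau^2+\sigma^2)$, and since $X^n\leftrightarrow\mu^n$ is a bijection, $\check\theta^n$ is a function of $\mu^n$ taking at most $2^{nB}$ distinct values. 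The standard Shannon converse chain
\[
nB \;\geq\; H(\check\theta^n) \;\geq\; I(\mu^n;\check\theta^n) \;\geq\; \sum_{i=1}^n I(\mu_i;\check\theta_i) \;\geq\; \sum_{i=1}^n \tfrac12\log_2\!\bigl(v/D_i\bigr),
\]
with $D_i = \mathbb E(\mu_i-\check\theta_i)^2$, uses independence of the $\mu_i$'s and the Gaussian maximum-entropy bound $h(\mu_i-\check\theta_i)\leq\frac12\log_2(2\pi e D_i)$. Convexity of $D\mapsto -\log D$ then yields $\frac{1}{n}\sum_i D_i\geq v\cdot 2^{-2B}$, i.e.\ $\mathbb E_\pi\|\mu^n-\check\theta^n\|^2\geq n\cdot\frac{\tau^4}{\tau^2+\sigma^2}\cdot 2^{-2B}$.

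Summing the two contributions, dividing by $n$, taking $\liminf_{n\to\infty}$, and finally letting $\tau^2\uparrow c^2$ gives the claimed inequality. The step that requires the most care is transferring the unconstrained Bayes lower bound on $\mathbb E_\pi$ to the supremum over $\Theta_n(c)$, because $\pi$ is not supported on the ball. The remedy: since $\tau^2<c^2$, the event $A_n=\{\theta^n\in\Theta_n(c)\}$ satisfies $\pi(A_n^c)\leq e^{-\alpha n}$ for some $\alpha>0$ by standard $\chi^2$ concentration, and the crude bound $\|\theta^n-\check\theta^n\|^2\leq 2\|\theta^n\|^2+2nc^2$ combined with $\mathbb E_\pi\|\theta^n\|^4 = O(n^2)$ yields $\mathbb E_\pi[\|\theta^n-\check\theta^n\|^2\mathbbm 1_{A_n^c}]/n = o(1)$; thus $\mathbb E_\pi[\,\cdot\mid A_n]$ differs from $\mathbb E_\pi[\,\cdot\,]$ by $o(1)$ in the normalized risk, and the conditional prior is supported on $\Theta_n(c)$ so dominated by $\sup_{\Theta_n(c)}$. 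Everything else -- posterior moments, the Shannon converse, and the Jensen step -- is classical.
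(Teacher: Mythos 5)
Your argument is correct and follows essentially the same route as the paper: a Bayes lower bound under an i.i.d.\ Gaussian prior of radius slightly below $c$, an orthogonal split at the posterior mean into a Pinsker term plus a quantization term, a single-letterized mutual-information converse for the latter, and a concentration argument to pass from the Bayes risk to the supremum over the ball. You present the Shannon converse inline as an explicit entropy chain ($nB\geq H(\check\theta^n)\geq I(\mu^n;\check\theta^n)\geq\sum_i I(\mu_i;\check\theta_i)$) rather than packaging it as the paper's abstract rate-distortion lemma together with its Gaussian solution, and you helpfully make explicit the reduction to codes whose outputs lie in $\Theta_n(c)$, which the paper uses only implicitly via the bound $\frac{1}{n}\|M_n(X^n)\|^2\leq c^2$ when controlling the contribution from outside the ball.
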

This lower bound on the limiting minimax risk can be viewed as the
usual minimax risk without quantization, plus an excess risk term due to
quantization. If we take $B$ to be zero, the risk becomes $c^2$, which is
obtained by estimating all of the means simply by zero.
On the other hand, letting $B\to\infty$, we recover the
minimax risk in Pinsker's theorem.  This tradeoff is
illustrated in Figure~\ref{tradeoff}.

The proof of the theorem is technical and we defer it to the
supplementary material. Here we sketch the basic idea of the proof. Suppose
we are able to find a prior distribution $\pi_n$ on $\theta^n$ and a
random vector $\tilde\theta^n$ such that for any $(n,B)$-rate
estimation code $M_n$ the following holds:
\begin{align*}
\frac{\sigma^2c^2}{\sigma^2+c^2}+\frac{c^4}{\sigma^2+c^2}2^{-2B}&\overset{(\text{I})}{=}\int \mathbb E_{X^n}d(\theta^n,\tilde\theta^n)d\pi_n(\theta^n)\\
&\overset{(\text{II})}{\leq} \int \mathbb E_{X^n}d(\theta^n,M_n(X^n))d\pi_n(\theta^n)\\
&\overset{(\text{III})}{\leq} \sup_{\theta^n\in\Theta_n(c)}\mathbb E_{X^n}d(\theta^n,M_n(X^n)).
\end{align*}
Then taking an infimum over $M_n\in\mathcal M_{n,B}$ gives us the desired result. In fact, we can take $\pi_n$, the prior on $\theta^n$, to be $\mathcal N(0,c^2I_n)$, and the model becomes $\theta_i\sim\mathcal N(0,c^2)$ and $X_i\given \theta_i\sim\mathcal N(\theta_i,\sigma^2)$. Then according to Lemma \ref{lemmaratedistortion}, inequality (II) holds with $\tilde\theta^n$ being the minimizer to the optimization problem
\begin{align*}
\min_{p(\tilde\theta^n\given X^n,\theta^n)}\quad & \mathbb E\,d(\theta^n,\tilde\theta^n) \\
\text{subject to}\quad & I(X^n;\tilde\theta^n)\leq nB,\\
& p(\tilde\theta^n\given X^n,\theta^n)=p(\tilde\theta^n\given X^n).
\end{align*}
The equality (I) holds due to Lemma \ref{lem:normalcase}. The
inequality (III) can be shown by a limiting concentration argument on
the prior distribution, which is included in the supplementary
material.
\begin{lemma}\label{lemmaratedistortion}
Suppose that $X_1,\dots,X_n$ are independent and generated by
$\theta_i\sim \pi(\theta_i)$ and $X_i\given \theta_i\sim p(x_i \given \theta_i)$. Suppose $M_n$ is an $(n,B)$-rate estimation code with risk $\mathbb E\,d(\theta^n,M_n(X^n))\leq D$. Then the rate $B$ is lower bounded by the solution to the following problem:
\begin{align}
\min_{p(\tilde\theta^n\given X^n,\theta^n)}\quad &
I(X^n;\tilde\theta^n) \nonumber\\ 
\text{subject to}\quad \ & \mathbb E\, d(\theta^n,\tilde\theta^n)\leq D,\label{optimization}\\
& p(\tilde\theta^n\given X^n,\theta^n)=p(\tilde\theta^n\given X^n). \nonumber
\end{align}
\end{lemma}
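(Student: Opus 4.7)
The plan is to mimic the classical converse argument from rate--distortion theory, adapted to the parameter-estimation setting. Concretely, I will show that the quantized estimator $\tilde\theta^n := M_n(X^n) = \psi_n(\phi_n(X^n))$ is itself a feasible point for the minimization in \eqref{optimization} whose objective value is bounded by $nB$; once this is established, the minimum value of the program is at most $nB$, which is exactly the inequality claimed.

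Three properties must be checked. (i) The Markov constraint $p(\tilde\theta^n \given X^n, \theta^n) = p(\tilde\theta^n \given X^n)$ holds automatically, since $M_n$ is a deterministic function of $X^n$ alone, so $\tilde\theta^n$ is conditionally independent of $\theta^n$ given $X^n$. (ii) The distortion constraint $\mathbb{E}\,d(\theta^n, \tilde\theta^n) \leq D$ is exactly the hypothesis of the lemma. (iii) For the mutual information, I would chain
\[
I(X^n; \tilde\theta^n) \;\leq\; H(\tilde\theta^n) \;\leq\; H(\phi_n(X^n)) \;\leq\; \log_2 2^{nB} \;=\; nB,
\]
using, in order, the identity $I(X;Y) = H(Y) - H(Y \given X) \leq H(Y)$; the fact that $\tilde\theta^n = \psi_n(\phi_n(X^n))$ is a deterministic function of the codeword $\phi_n(X^n)$ and so has no larger entropy; and the cardinality bound $|\phi_n(\mathbb{R}^n)| \leq 2^{nB}$ on the codebook.

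The argument is short, and there is no serious obstacle once the deterministic encoder/decoder structure of an $(n,B)$-rate estimation code is exploited. The one point requiring minor care is recognizing that the constraint $p(\tilde\theta^n \given X^n, \theta^n) = p(\tilde\theta^n \given X^n)$ appearing in \eqref{optimization} is precisely the Markov condition $\theta^n \to X^n \to \tilde\theta^n$, and that any estimator which factors through the observations $X^n$ (as $M_n$ must) satisfies it automatically. After this observation, the proof reduces to a few lines of standard information-theoretic manipulation.
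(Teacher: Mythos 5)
Your proof is correct and follows essentially the same route as the paper's: the key step in both is the chain $nB \geq H(\phi_n(X^n)) \geq I(X^n; M_n(X^n))$ from the cardinality bound on the codebook, combined with the observation that $\tilde\theta^n = M_n(X^n)$ satisfies the Markov and distortion constraints of \eqref{optimization}. The only cosmetic difference is that you conclude by directly exhibiting $M_n(X^n)$ as a feasible point for the program at level $D$, whereas the paper evaluates the value function $B^*(\cdot)$ at the actual risk and then appeals to its monotonicity; the two arguments are interchangeable.
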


The next lemma gives the solution to problem (\ref{optimization}) when we have $\theta_i\sim\mathcal N(0,c^2)$ and $X_i\given \theta_i\sim\mathcal N(\theta_i,\sigma^2)$

\begin{lemma}\label{lem:normalcase}
Suppose $\theta_i\sim\mathcal N(0,c^2)$ and $X_i\given \theta_i\sim\mathcal N(\theta_i,\sigma^2)$ for $i=1,\dots,n$. For any random vector $\tilde\theta^n$ satisfying $\mathbb E\, d(\theta^n,\tilde\theta^n)\leq D$ and $p(\tilde\theta^n\given X^n,\theta^n)=p(\tilde\theta^n\given X^n)$ we have
\[
I(X^n;\tilde\theta^n) \geq \frac{n}{2}\log\frac{c^4}{(\sigma^2+c^2)(D-\frac{\sigma^2c^2}{\sigma^2+c^2})}.
\]
\end{lemma}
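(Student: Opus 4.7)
The plan is to reduce the problem to a standard Gaussian rate-distortion bound by conditioning on the posterior mean. First I would compute the posterior distribution: since the model is Gaussian-Gaussian, $\theta_i \mid X^n \sim \mathcal{N}(\mu_i, \tau^2)$ with $\mu_i = \frac{c^2}{\sigma^2+c^2} X_i$ and $\tau^2 = \frac{\sigma^2 c^2}{\sigma^2+c^2}$. Write $\mu^n = (\mu_1,\dots,\mu_n)$. Marginally the $\mu_i$ are i.i.d.\ $\mathcal{N}(0, v^2)$ with $v^2 = \frac{c^4}{\sigma^2+c^2}$, and the map $X^n \mapsto \mu^n$ is a bijection, so $I(X^n;\tilde\theta^n) = I(\mu^n;\tilde\theta^n)$.

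The next step is the bias-variance decomposition that exploits the Markov constraint $p(\tilde\theta^n\given X^n,\theta^n)=p(\tilde\theta^n\given X^n)$, i.e.\ $\theta^n\to X^n\to\tilde\theta^n$. For each coordinate,
\[
\mathbb{E}(\theta_i-\tilde\theta_i)^2 = \mathbb{E}(\theta_i-\mu_i)^2 + 2\,\mathbb{E}\bigl[(\theta_i-\mu_i)(\mu_i-\tilde\theta_i)\bigr] + \mathbb{E}(\mu_i-\tilde\theta_i)^2.
\]
Conditioning on $X^n$, the Markov property makes $\mu_i - \tilde\theta_i$ a function of $X^n$, and $\mathbb{E}[\theta_i-\mu_i\mid X^n]=0$ by definition of $\mu_i$, so the cross term vanishes. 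Averaging over $i$ gives $\mathbb{E} d(\theta^n,\tilde\theta^n) = \tau^2 + \mathbb{E} d(\mu^n,\tilde\theta^n)$, so the distortion constraint becomes $\mathbb{E} d(\mu^n,\tilde\theta^n) \leq D - \tau^2$.

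Now the problem is precisely to lower bound $I(\mu^n;\tilde\theta^n)$ when $\mu^n$ is an i.i.d.\ Gaussian sequence with variance $v^2$ under squared-error distortion $D'=D-\tau^2$. I would invoke the classical Gaussian converse: writing $I(\mu^n;\tilde\theta^n) = h(\mu^n) - h(\mu^n - \tilde\theta^n \mid \tilde\theta^n)$, then using independence across $i$ for $h(\mu^n)$, subadditivity of differential entropy, the Gaussian maximum-entropy bound on each $h(\mu_i - \tilde\theta_i)$, and Jensen (concavity of $\log$) to combine the coordinate distortions, one obtains
\[
I(\mu^n;\tilde\theta^n) \geq \tfrac{n}{2}\log\frac{v^2}{D'} = \tfrac{n}{2}\log\frac{c^4}{(\sigma^2+c^2)\bigl(D - \tfrac{\sigma^2 c^2}{\sigma^2+c^2}\bigr)},
\]
which is the stated bound.

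The only nontrivial step is the orthogonality argument that lets us replace the reconstruction target $\theta^n$ by its posterior mean $\mu^n$; everything else is book-keeping and a direct appeal to the Gaussian rate-distortion converse. I should also note that the bound is only meaningful when $D \geq \tau^2$; if $D < \tau^2$ the distortion constraint is infeasible since even the Bayes estimator $\mu^n$ incurs per-coordinate risk $\tau^2$, and the right-hand side can be taken to be $+\infty$, so the inequality holds vacuously.
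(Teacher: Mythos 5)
Your proof is correct and follows essentially the same route as the paper's. The paper also decomposes the distortion through the posterior mean $\gamma X_i$ (your $\mu_i$), uses the Markov property and $\mathbb{E}(\theta_i\mid X_i)=\gamma X_i$ to kill the cross term, and then applies the identical chain of steps for the Gaussian converse: $I(X^n;\tilde\theta^n)=I(\gamma X^n;\tilde\theta^n)$, single-letterization, the Gaussian maximum-entropy bound, and concavity of $\log$. Your remark that the bound is vacuous when $D<\tau^2$ is a minor clarification the paper leaves implicit, but it does not alter the argument.
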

Combining the above two lemmas, we obtain a lower bound of the risk assuming that $\theta^n$ follows the prior distribution $\pi_n$:
\begin{corollary}\label{cor:lowerbound}
Suppose $M_n$ is a $(n,B)$-rate estimation code for the source $\theta_i\sim\mathcal N(0,c^2)$ and $X_i\given \theta_i\sim\mathcal N(\theta_i,\sigma^2)$, then
\begin{equation}
\mathbb E\,d(\theta^n,M_n(X^n))\geq \frac{\sigma^2c^2}{\sigma^2+c^2}+\frac{c^4}{\sigma^2+c^2}2^{-2B}.
\end{equation} 
\end{corollary}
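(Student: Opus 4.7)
The plan is to combine Lemmas \ref{lemmaratedistortion} and \ref{lem:normalcase} directly, by using the estimator output $M_n(X^n)$ itself as the auxiliary random vector $\tilde\theta^n$ in both lemmas.

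First, set $\tilde\theta^n := M_n(X^n)$ and let $D := \mathbb{E}\,d(\theta^n,M_n(X^n))$ denote the Bayes risk of the quantized estimator under the prior $\theta_i\sim\mathcal N(0,c^2)$. Two elementary properties of $\tilde\theta^n$ put us in position to apply both lemmas: (a) $\tilde\theta^n$ is a deterministic function of $X^n$, hence conditionally independent of $\theta^n$ given $X^n$, so the Markov constraint $p(\tilde\theta^n\given X^n,\theta^n)=p(\tilde\theta^n\given X^n)$ is satisfied; and (b) $\tilde\theta^n$ takes values in the codebook, which has cardinality at most $2^{nB}$, so $H(\tilde\theta^n)\leq nB$. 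Combining (b) with $I(X^n;\tilde\theta^n)\leq H(\tilde\theta^n)$ gives the clean inequality $I(X^n;\tilde\theta^n)\leq nB$. (This is essentially the content of Lemma \ref{lemmaratedistortion} applied to our specific choice of $\tilde\theta^n$.)

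Next, since $\tilde\theta^n$ satisfies the distortion constraint $\mathbb E\,d(\theta^n,\tilde\theta^n)\leq D$ by definition of $D$, Lemma \ref{lem:normalcase} yields the lower bound
\[
I(X^n;\tilde\theta^n)\;\geq\;\frac{n}{2}\log\frac{c^4}{(\sigma^2+c^2)\bigl(D-\tfrac{\sigma^2c^2}{\sigma^2+c^2}\bigr)}.
\]
(Implicit here is that $D>\sigma^2c^2/(\sigma^2+c^2)$, which is forced anyway: the right-hand side blows up otherwise, contradicting $I(X^n;\tilde\theta^n)\leq nB<\infty$; equivalently, the posterior mean already achieves this MMSE, and any quantized estimator can only do worse.) Chaining this with the bound $I(X^n;\tilde\theta^n)\leq nB$ from the previous step gives
\[
nB\;\geq\;\frac{n}{2}\log\frac{c^4}{(\sigma^2+c^2)\bigl(D-\tfrac{\sigma^2c^2}{\sigma^2+c^2}\bigr)}.
\]
Solving this inequality for $D$ is a one-line algebraic rearrangement: exponentiating and isolating $D$ produces exactly
\[
D\;\geq\;\frac{\sigma^2c^2}{\sigma^2+c^2}+\frac{c^4}{\sigma^2+c^2}\,2^{-2B},
\]
as required.

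There is no real obstacle here, since the two lemmas do essentially all of the work; the only thing to be careful about is verifying that $\tilde\theta^n=M_n(X^n)$ is genuinely admissible in both lemmas (the Markov condition and the discreteness giving $H(\tilde\theta^n)\leq nB$), and handling the degenerate case $D=\sigma^2c^2/(\sigma^2+c^2)$ by the MMSE remark above. The proof is really just the observation that a $(n,B)$-rate code, through its discrete output, supplies a feasible point to the rate-distortion optimization solved by Lemma \ref{lem:normalcase}.
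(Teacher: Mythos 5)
Your proof is correct and is precisely the intended combination of Lemmas \ref{lemmaratedistortion} and \ref{lem:normalcase}; the paper itself offers no separate argument for this corollary beyond the phrase ``combining the above two lemmas,'' and what you have written is exactly that combination, spelled out. The one stylistic difference is that you bypass the $B^*(D)$ abstraction in Lemma \ref{lemmaratedistortion} and instead feed $\tilde\theta^n = M_n(X^n)$ directly into Lemma \ref{lem:normalcase}, chaining $nB \geq H(\tilde\theta^n) \geq I(X^n;\tilde\theta^n)$ with the information lower bound; this is logically equivalent and arguably cleaner, and your side remark on why $D > \sigma^2c^2/(\sigma^2+c^2)$ is forced for finite $B$ correctly patches the one degenerate case.
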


\subsection{An adaptive source coding method}

We now present a source coding method, which we will show attains
the minimax lower bound asymptotically with high probability.

Suppose that the encoder is given a sequence of observations
$(X_1,\dots,X_n)$, and both the encoder and the decoder know the radius
$c$ of the $L_2$ ball in which the mean vector lies. The steps of the
source coding method are outlined below:

\begin{enumerate}[Step 1.]
\item Generating codebooks. The codebooks are distributed to both the encoder and the decoder.
\begin{enumerate}
\item Generate codebook $\mathcal B=\{1/\sqrt{n},2/\sqrt{n},\dots,\lceil c^2\sqrt{n}\rceil/\sqrt{n}\}$.
\item Generate codebook $\mathcal X$ which consists of $2^{nB}$ i.i.d.\ random vectors from the uniform distribution on the $n$-dimensional unit sphere $\mathbb S^{n-1}$.
\end{enumerate}
\item Encoding.
\begin{enumerate}
\item Encode $\hat b^2=\frac{1}{n}\|X\|^2-\sigma^2$ by
\[
\varphi_\mathcal B(\hat b^2)=\argmin\{|b^2-\hat b^2|:b^2\in\mathcal B\}.
\]
\item Encode $X^n$ by
\[
\varphi_{\mathcal X}(X^n)=\argmax\{\langle X^n,x^n\rangle:x^n\in\mathcal X\}
\]
\end{enumerate}
\item Transmit or store $\left(\varphi_\mathcal B(\hat b^2),\varphi_{\mathcal X}(X^n)\right)$  using $\log c^2+\frac{1}{2}\log n+nB$ bits.
\item Decoding.
\begin{enumerate}
\item Decode $\check b^2=\psi_\mathcal B\left(\varphi_\mathcal B(\hat b^2)\right)$ and $\check X^n=\psi_\mathcal X\left(\varphi_\mathcal X(X^n)\right)$, where the decoding function $\psi_\mathcal C(i)$ returns the $i$th element in the codebook $\mathcal C$.
\item Estimate $\theta$ by 
\[
\check\theta^n=\sqrt{\frac{n\check b^4(1-2^{-2B})}{\check b^2+\sigma^2}}\cdot \check X^n.
\]
\end{enumerate}
\end{enumerate}
We make several remarks on this quantized estimation method.

\textbf{Remark 1.} The rate of this coding method is $B+\frac{\log
  c^2}{n}+\frac{\log n}{2n}$, which is asymptotically $B$ bits.

\textbf{Remark 2.} The method is probabilistic; the randomness comes
from the construction of the codebook $\mathcal X$. Denoting by
$\mathcal M^*_{n,B,\sigma,c}$ the ensemble of such random quantizers,
there is then a natural one-to-one mapping between
$\mathcal M^*_{n,B,\sigma,c}$ and $(\mathbb S^{n-1})^{2^{nB}}$ and we
attach probability measure to $\mathcal M^*_{n,B,\sigma,c}$
corresponding to the product uniform distribution on $(\mathbb
S^{n-1})^{2^{nB}}$.

\textbf{Remark 3.} The main idea behind this coding scheme is to
encode the magnitude and the direction of the observation vector
separately, in such a way that the procedure adapts to sources with
different norms of the mean vectors.

\textbf{Remark 4.} The computational complexity of this source coding
method is exponential in $n$. Therefore, like the Shannon random
codebook, this is a demonstration of the asymptotic
achievability of the lower bound (\ref{minimax}), rather than a practical
scheme to be implemented. We discuss possible computationally
efficient algorithms in Section \ref{sec:discuss}.

The following shows that with high probability this procedure will
attain the desired lower bound asymptotically.

\begin{theorem}\label{thm:codingmethod} For a sequence of vectors $\{\theta^n\}_{n=1}^\infty$ satisfying $\theta^n\in\mathbb R^n$ and $\|\theta^n\|^2/n=b^2\leq c^2$, as $n\to\infty$
\begin{equation}
\mathbb
P\left(d(\theta^n,M_n(X^n))>\frac{\sigma^2b^2}{\sigma^2+b^2}+\frac{b^4}{\sigma^2+b^2}2^{-2B}+C\sqrt{\frac{\log
      n}{n}}\right)\longrightarrow 0
\end{equation}
for some constant $C$ that does not depend on $n$ (but could possibly depend on $b$, $\sigma$ and $B$). The probability measure is with respect to both $M_n\in\mathcal M^*_{n,B,\sigma,c}$ and $X^n\in\mathbb R^n$.
\end{theorem}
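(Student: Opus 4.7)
The plan is to control each source of randomness in the scheme separately and then combine them via a union bound. Writing $\check\theta^n=\alpha\,\check X^n$ with $\alpha=\sqrt{n\check b^4(1-2^{-2B})/(\check b^2+\sigma^2)}$ and using $\|\check X^n\|=1$, the loss admits the exact identity
\begin{equation*}
d(\theta^n,\check\theta^n)=b^2-\tfrac{2\alpha}{n}\langle\theta^n,\check X^n\rangle+\tfrac{\alpha^2}{n}.
\end{equation*}
Setting $\alpha_\star=\sqrt{nb^4(1-2^{-2B})/(b^2+\sigma^2)}$, if I can show with high probability that (i) $\alpha=\alpha_\star+O(\sqrt{\log n})$, and (ii) $\langle\theta^n,\check X^n\rangle=\sqrt{n}\,b^2\sqrt{1-2^{-2B}}/\sqrt{b^2+\sigma^2}+O(\sqrt{\log n})$, then a direct algebraic expansion collapses the right-hand side to $\sigma^2b^2/(b^2+\sigma^2)+b^4 2^{-2B}/(b^2+\sigma^2)$ with cross-term error of order $\sqrt{\log n/n}$.

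For (i), I write $X^n=\theta^n+\epsilon$ with $\epsilon\sim\mathcal N(0,\sigma^2 I_n)$, so that $\hat b^2-b^2=(\|\epsilon\|^2-n\sigma^2)/n+2\langle\theta^n,\epsilon\rangle/n$. A chi-square tail bound and $\langle\theta^n,\epsilon\rangle\sim\mathcal N(0,\sigma^2\|\theta^n\|^2)$ give $|\hat b^2-b^2|=O(\sqrt{\log n/n})$ with probability tending to one. The magnitude codebook $\mathcal B$ adds a deterministic quantization error of at most $1/(2\sqrt n)$, so $|\check b^2-b^2|=O(\sqrt{\log n/n})$; since $\alpha$ is a smooth function of $\check b^2$, (i) follows.

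For (ii), let $u=X^n/\|X^n\|$ and write $\check X^n=\gamma u+\sqrt{1-\gamma^2}\,w$ with $\gamma=\max_{x\in\mathcal X}\langle u,x\rangle$ and $w$ a unit vector in $u^\perp$. By rotational invariance of the uniform distribution on $\mathbb S^{n-1}$, each $\langle u,x_i\rangle$ is distributed as the first coordinate of a uniform point on the sphere, with tail proportional to $(1-t^2)^{(n-1)/2}$ up to polynomial factors; two-sided union bounds over the $2^{nB}$ codewords yield $\gamma=\sqrt{1-2^{-2B}}+O(\sqrt{\log n/n})$. Furthermore, for each $i$, conditional on $\langle u,x_i\rangle$ the equatorial component of $x_i$ is uniform on $\mathbb S^{n-2}\subset u^\perp$ and independent across $i$; since the argmax index depends only on the longitudinal coordinates, conditioning on this index leaves $w$ uniform on $\mathbb S^{n-2}$. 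Consequently $\langle\theta^n_\perp,w\rangle$ is sub-Gaussian with variance proxy at most $\|\theta^n_\perp\|^2/(n-1)\le b^2 n/(n-1)$, hence $O(b\sqrt{\log n})$ with probability tending to one. Combining with $\langle\theta^n,u\rangle=(\|\theta^n\|^2+\langle\theta^n,\epsilon\rangle)/\|X^n\|=\sqrt n\,b^2/\sqrt{b^2+\sigma^2}+O(\sqrt{\log n})$ gives (ii).

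The main obstacle is making the decoupling step in the previous paragraph rigorous: although $\check X^n$ is the argmax over the full codebook, one must show that conditioning on the value $\gamma$ and on the identity of the maximizing codeword leaves the orthogonal direction $w$ uniform on the unit sphere of $u^\perp$. The argument relies on the product structure of the codebook combined with the spherical symmetry of each uniform $x_i$ about $u$; once this decoupling is in place, the remaining ingredients are standard Gaussian and spherical concentration inequalities, and a final union bound over the $O(1)$ good events (concentration of $\hat b^2$, of $\|X^n\|$, of $\gamma$, of $\langle\theta^n_\perp,w\rangle$, and of $\langle\theta^n,\epsilon\rangle$) produces the claimed conclusion with a constant $C$ depending on $b$, $\sigma$, and $B$.
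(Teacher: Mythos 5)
Your decomposition is algebraically different from the paper's but rests on the same geometric and probabilistic core, and the argument is correct. The paper centers its analysis on the auxiliary shrinkage estimator $\hat\gamma X^n$ with $\hat\gamma=\hat b^2/(\hat b^2+\sigma^2)$, splitting the loss into three interpretable pieces: a quantization error $A_1=\frac1n\|\check\theta^n-\hat\gamma X^n\|^2$, a James--Stein-type estimation error $A_2=\frac1n\|\hat\gamma X^n-\theta^n\|^2$, and a cross term $A_3$; it then expands $A_3$ further via another auxiliary quantity $\tilde\gamma=\langle\theta^n,X^n\rangle/\|X^n\|^2$ so that the residual $\tilde\gamma X^n-\theta^n$ is exactly orthogonal to $X^n$. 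You instead exploit $\|\check X^n\|=1$ to write the loss exactly as $b^2-\frac{2\alpha}{n}\langle\theta^n,\check X^n\rangle+\frac{\alpha^2}{n}$ and reduce everything to two scalars, $\alpha$ (a function of $\check b^2$) and $\langle\theta^n,\check X^n\rangle$. Your route is more compact and avoids the auxiliary estimators, at the cost of packing the ``quantization'' and ``estimation'' error sources into a single inner product that must be split by hand into longitudinal and transverse parts; the paper's three-term split makes each error source visible but needs a longer chain of algebra, particularly for $A_3$.

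Both proofs hinge on the same three facts, corresponding to the paper's Lemmas~\ref{lem:concentration}, \ref{lem:extremeangle}, and \ref{lem:orthog}: concentration of $\hat b^2$ about $b^2$ at scale $n^{-1/2}$, concentration of the maximal codeword correlation $\gamma$ about $\sqrt{1-2^{-2B}}$ at scale $(\log n)/n$, and sub-Gaussian behavior of the inner product of a fixed vector with a uniform direction on a sphere. The decoupling step you flag as the ``main obstacle'' --- that conditioning on the value of $\gamma$ and the identity of the winning codeword leaves the transverse direction $w$ uniform on the unit sphere of $u^\perp$ --- is exactly the point the paper invokes when it asserts that $\mathrm{Proj}_{(X^n)^\perp}(\check X^n)$ has a spherical direction independent of $X^n$, and your explanation via the product structure of the codebook and the rotational invariance of each codeword about $u$ is in fact more careful than the paper's one-line assertion (the length $L_n=\sqrt{1-\gamma^2}$ is not independent of $X^n$; only the direction is). One small point to keep in mind when filling in details: the magnitude codebook $\mathcal B$ is truncated at $\lceil c^2\sqrt n\rceil/\sqrt n$, so the deterministic $1/(2\sqrt n)$ quantization bound on $|\check b^2-\hat b^2|$ requires $\hat b^2$ to fall inside the codebook's range, which the concentration of $\hat b^2$ near $b^2\le c^2$ guarantees with high probability; the resulting event can be absorbed into your final union bound.
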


This theorem shows that the source coding method not only achieves the desired minimax lower bound for the $L_2$ ball with high probability with respect to the random codebook and source distribution, but also adapts to the true magnitude of the mean vector $\theta^n$. It agrees with the intuition that the hardest mean vector to estimate lies on the boundary of the $L_2$ ball. Based on Theorem \ref{thm:codingmethod} we can obtain a uniform high probability bound for mean vectors in the $L_2$ ball.

\begin{corollary}
For any sequence of vectors $\{\theta^n\}_{n=1}^\infty$ satisfying $\theta^n\in\mathbb R^n$ and $\|\theta^n\|^2/n\leq c^2$, as $n\to\infty$
\begin{equation*}
\mathbb
P\left(d(\theta^n,M_n(X^n))>\frac{\sigma^2c^2}{\sigma^2+c^2}+\frac{c^4}{\sigma^2+c^2}2^{-2B}+C'\sqrt{\frac{\log
      n}{n}}\right)
\longrightarrow 0
\end{equation*}
for some constant $C'$ that does not depend on $n$.
\end{corollary}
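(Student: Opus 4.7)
The plan is to derive the corollary from Theorem~\ref{thm:codingmethod} via two ingredients: monotonicity of the limiting risk expression in $b^2$, and uniform control of the deviation constant over $b \in [0, c]$.

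First, I would observe that the function
\[
g(u) = \frac{\sigma^2 u}{\sigma^2 + u} + \frac{u^2}{\sigma^2 + u}\,2^{-2B}
\]
is strictly increasing on $[0,\infty)$, since a direct differentiation yields
\[
g'(u) = \frac{\sigma^4 + u(2\sigma^2 + u)\,2^{-2B}}{(\sigma^2+u)^2} > 0.
\]
Writing $b_n^2 := \|\theta^n\|^2/n \in [0, c^2]$ for the possibly $n$-varying normalized norm, the hypothesis of the corollary forces $g(b_n^2) \leq g(c^2)$, which lets me upgrade the $b_n$-dependent leading term in Theorem~\ref{thm:codingmethod} to the $c$-dependent one stated in the corollary.

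Second, I would revisit the proof of Theorem~\ref{thm:codingmethod} to confirm that the constant $C(b,\sigma,B)$ there can be taken continuous (and hence bounded) in $b$ on $[0, c]$. The $b$-dependence enters only through Gaussian concentration for $\|X^n\|^2/n$ (which has mean $b^2 + \sigma^2$) and through the decoder normalization $\sqrt{\check b^4 (1-2^{-2B})/(\check b^2+\sigma^2)}$; both are continuous functions of $b$ on $[0, c]$. I would then set $C' = \sup_{b \in [0,c]} C(b,\sigma,B)$ and verify that the probability bound in Theorem~\ref{thm:codingmethod} also converges to $0$ uniformly in $b \in [0, c]$, e.g.\ by exhibiting an explicit exponential or polynomial decay that does not depend on $b$ through exploding factors. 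Applying Theorem~\ref{thm:codingmethod} pointwise along $\{\theta^n\}$ with $b = b_n$, and then replacing $g(b_n^2)$ by $g(c^2)$ and $C$ by $C'$, yields the corollary.

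The step I anticipate as the main obstacle is uniformity near $b = 0$, where the decoder normalization degenerates and the constant from the proof of Theorem~\ref{thm:codingmethod} could in principle become singular. A clean workaround is a case split: for $b \leq \delta$ with $\delta$ sufficiently small, the trivial quantizer $\check\theta^n = 0$ already attains risk at most $b^2 \leq \delta^2$, which is dominated by the right-hand side of the corollary; for $b \in [\delta, c]$ the interval is compact and bounded away from zero, so continuity of $C(\cdot,\sigma,B)$ delivers the uniform $C'$ directly.
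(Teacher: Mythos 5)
Your core step---monotonicity of
\[
g(u)=\frac{\sigma^2 u}{\sigma^2+u}+\frac{u^2}{\sigma^2+u}\,2^{-2B}
\]
on $[0,c^2]$, giving $g(b_n^2)\le g(c^2)$---is exactly the argument the paper intends (the paper does not spell out a proof; it simply cites Theorem~\ref{thm:codingmethod}). Your derivative computation is correct, and you are right that the extra issue is that Theorem~\ref{thm:codingmethod} is stated for a sequence with a \emph{fixed} normalized norm $b^2$, whereas the corollary allows $b_n^2=\|\theta^n\|^2/n$ to vary, so you need both the constant $C$ and the rate of decay of the probability to be uniform over $b\in[0,c]$.

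Two remarks on the uniformity step. First, your proposed workaround near $b=0$ (``switch to the trivial quantizer $\check\theta^n=0$'') does not prove the corollary as stated: the probability in the corollary refers to the specific quantized estimator $M_n\in\mathcal M^*_{n,B,\sigma,c}$ built in Section~3.1, not to whichever quantizer is most convenient for a given $b$; you are not free to change the scheme based on $b$. Second, the degeneracy you fear does not actually arise: the codebook $\mathcal B$ starts at $1/\sqrt n$, so $\check b^2\ge 1/\sqrt n>0$ and the decoder normalization $\check b^4(1-2^{-2B})/(\check b^2+\sigma^2)$ never divides by zero and tends to $0$ smoothly as $b\to 0$; and in Lemma~\ref{lem:concentration} the $b$-dependent term $\frac{8\sigma b}{\sqrt{2\pi n t^2}}\exp\!\bigl(-\frac{n t^2}{32\sigma^2 b^2}\bigr)$ only gets \emph{smaller} as $b\downarrow 0$. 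The remaining pieces (Lemmas~\ref{lem:extremeangle} and \ref{lem:orthog}) do not involve $b$ at all, and the Lipschitz constants of the maps $u\mapsto u^2(1-2^{-2B})/(\sigma^2+u)$ and $u\mapsto u/(\sigma^2+u)$ are uniformly bounded on $[0,c^2]$. So the uniform constant $C'=\sup_{b\in[0,c]}C(b,\sigma,B)$ is finite, the decay in $n$ is uniform in $b$, and the case split is unnecessary. With that caveat, your proof is essentially the intended one.
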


We include the details of the proof of Theorem \ref{thm:codingmethod}
in the supplementary material, which carefully analyzes the three
terms in the following decomposition of the loss function:
\begin{align*}
d(\theta^n,\check\theta^n)&=\frac{1}{n}\left\|\check\theta^n-\theta^n\right\|^2\\
&=\frac{1}{n}\left\|\check\theta^n-\hat\gamma X^n+\hat\gamma X^n-\theta^n\right\|^2\\
&=\underbrace{\frac{1}{n}\left\|\check\theta^n-\hat\gamma X^n\right\|^2}_{A_1}+\underbrace{\frac{1}{n}\left\|\hat\gamma X^n-\theta^n\right\|^2}_{A_2}+\underbrace{\frac{2}{n}\langle\check\theta^n-\hat \gamma X^n,\hat\gamma X^n-\theta^n\rangle}_{A_3}
\end{align*}
where $\hat\gamma=\frac{\hat
  b^2}{\hat b^2+\sigma^2}$ with $\hat b^2=\|X^n\|^2/n-\sigma^2$. Term $A_1$ characterizes the quantization
error. Term $A_2$ does not involve the random codebook, and is the loss of
a type of James-Stein estimator. The cross term $A_3$ vanishes as $n\to\infty$.


\section{Simulations}
\label{sec:sims}

In this section we present a set of simulation results showing the
empirical performance of the proposed quantized estimation method. Throughout
the simulation, we fix the noise level $\sigma^2=1$, while varying the
other parameters $c$ and $B$.

First we show in Figure \ref{fig:shrinkage} the effect of quantized
estimation and compare it with the James-Stein estimator.
Setting $n=15$ and
$c=2$, we randomly generate a mean vector $\theta^n\in\mathbb R^{n}$
with $\|\theta\|^2/n=c^2$. A random vector $X$ is then drawn from
$\mathcal N(\theta^n,I_n)$ and quantized estimates with rates
$B\in\{0.1,0.2,0.5,1\}$ are calculated; for comparison we also compute
the James-Stein estimator, given by $ \hat\theta_{\mbox{\tiny JS}}^n =
\left(1-\frac{(n-2)\sigma^2}{\|X^n\|^2}\right) X^n.$ 
We repeat this sampling and estimation procedure 100
times and report the averaged risk estimates in Figure
\ref{fig:shrinkage}. We see that the quantized estimator essentially
shrinks the random vector towards zero. With small rates, the shrinkage
is strong, with all the estimates close to zero.  Estimates with larger
rates approach the James-Stein estimator.
\begin{figure}[t]
\centering
\hskip20pt\includegraphics[width=5.2in]{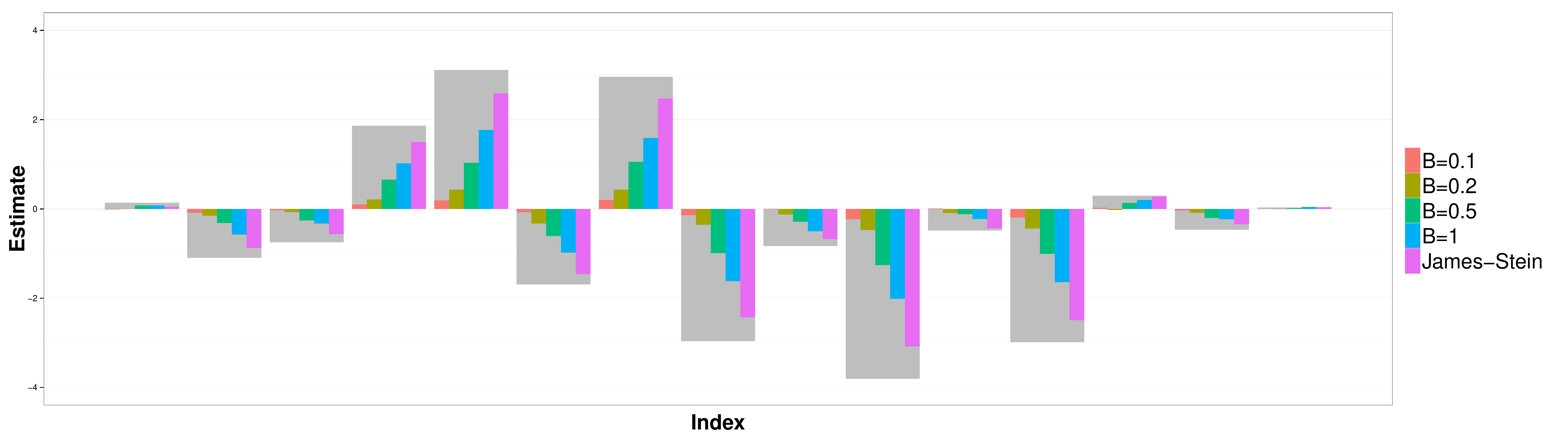}
\caption{\small Comparison of the quantized estimates with different
  rates $B$, the James-Stein estimator, and the true mean vector. The
  heights of the bars are the averaged estimates based on 100
  replicates. Each large background rectangle indicates the original
  mean component $\theta_j$.}
\label{fig:shrinkage}
\end{figure}

In our second set of simulations, we choose $c$ from
$\{0.1,0.5,1,5,10\}$ to reflect different signal-to-noise ratios, and
choose $B$ from $\{0.1,0.2,0.5,1\}$. For each combination of the
values of $c$ and $B$, we vary $n$, the dimension of the mean vector,
which is also the number of observations. Given a set of parameters $c$, $B$ and $n$, a mean
vector $\theta^n$ is generated uniformly on the sphere
$\|\theta^n\|^2/n=c^2$ and data $X^n$ are generated following the
distribution $\mathcal N(\theta^n,\sigma^2I_n)$. We quantize the
data using the source coding method, and compute the mean squared
error between the estimator and the true mean vector. The procedure is
repeated 100 times for each of the parameter combinations, and the
average and standard deviation of the mean squared errors are
recorded. The results are shown in Figure \ref{fig:simulation}. We see
that as $n$ increases, the average error decreases and approaches the
theoretic lower bound in Theorem \ref{thm:lowerbound}. Moreover, the
standard deviation of the mean squared errors also decreases, 
confirming the result of Theorem \ref{thm:codingmethod} that the
convergence is with high probability.
\begin{figure}[t]
\centering
\includegraphics[width=5.5in]{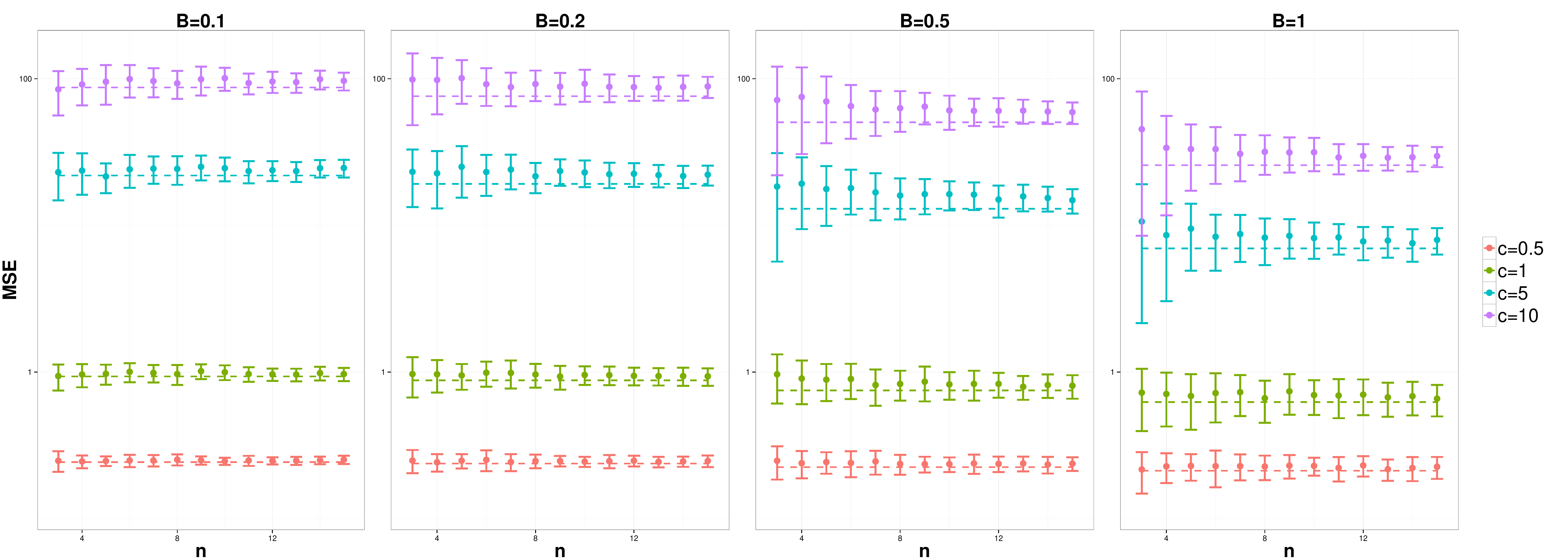}
\caption{Mean squared errors and standard deviations of the quantized estimator versus $n$ for different values of $(B,c)$. The horizontal dashed lines indicate the lower bounds.}
\label{fig:simulation}
\end{figure}

\section{Discussion and future work}
\label{sec:discuss}
\label{sec:related}

In this paper, we establish a sharp lower bound on the asymptotic
minimax risk for quantized estimators of nonparametric normal means
for the case of a Euclidean ball. Similar techniques can be applied to
the setting where the parameter space is an ellipsoid
$\Theta=\left\{\theta:\sum_{j=1}^\infty a_j^2\theta_j^2\leq
c^2\right\}$.  A principal case of interest is the Sobolev ellipsoid
of order $m$ where $a_j^2\sim (\pi j)^{2m}$ as $j\to\infty$. The Sobolev ellipsoid arises
naturally in nonparametric function estimation and is thus of great
importance. We leave this to future work.

Donoho discusses the parallel between rate distortion theory and
Pinsker's work in his Wald Lectures~\cite{donoho2000wald}. Focusing on
the case of the Sobolev space of order $m$, which we denote by $\F_m$,
it is shown that the Kolmogorov entropy $H_\epsilon(\F_m)$ and the
rate distortion function $R(D,X)$ satisfy $ H_\epsilon(\F_m) \asymp
\sup\{R(\epsilon^2, X) : {\mathbb P}(X\in \F_m)=1\}$ as
$\epsilon\rightarrow 0$.  This connects the worst-case minimax
analysis and least-favorable rate distortion function for the function
class.  Another information-theoretic formulation of minimax rates
lies in the so-called ``le Cam equation'' $H_\epsilon(\F) = n
\epsilon^2$ \cite{Wong:Shen:1995,Yang:Barron:1999}.  However, both are
different from the direction we pursue in this paper, which is to
impose communication constraints in minimax analysis.

In other related work, researchers in communications theory have
studied estimation problems in sensor networks under communication
constraints. Draper and Wornell \cite{draper2004side} obtain a result
on the so-called ``one-step problem'' for the quadratic-Gaussian case,
which is essentially the same as the statement in our Corollary
\ref{cor:lowerbound}. In fact, they consider a similar setting, but
treat the mean vector as random and generated independently from a
known normal distribution.  In contrast, we assume a fixed but unknown
mean vector and establish a minimax lower bound as well as an adaptive
source coding method that adapts to the fixed mean vector within the
parameter space.  Zhang et al.~\cite{zhang2013information} also
consider minimax bounds with communication constraints.  However, the
analysis in \cite{zhang2013information} is focused on distributed
parametric estimation, where the data are distributed between several
machines.  Information is shared between the machines in order to
construct a parameter estimate, and constraints are placed on the
amount of communication that is allowed.

In addition to treating more general ellipsoids, an important
direction for future work is to design computationally efficient
quantized nonparametric estimators.  One possible method is to divide
the variables into smaller blocks and quantize them separately.  A
more interesting and promising approach is to adapt the recent work of
Venkataramanan et al.~\cite{venkataramanan2013lossy} that uses sparse
regression for lossy compression.  We anticipate that with appropriate
modifications, this scheme can be applied to quantized nonparametric
estimation to yield practical algorithms, trading off a worse error
exponent in the convergence rate to the optimal quantized minimax risk
for reduced complexity encoders and decoders.





\appendix
\section{Proofs}
\subsection{Proof of Theorem \ref{thm:lowerbound}}

\begin{proof}[Proof of Lemma \ref{lemmaratedistortion}]
Denote the solution to problem (\ref{optimization}) by $B^*(D)$. Suppose that $M_n$ is an $(n,B)$-rate estimation code $M_n$ with risk $\mathbb E\,d(\theta^n,M_n(X^n))\leq D$. We have
\begin{align}
B&\geq I(X^n;M_n(X^n))/n\label{eqn:lem1.1}\\
&\geq B^*\left(\mathbb E\,d(\theta^n,M_n(X^n))\right)\label{eqn:lem1.2}\\
&\geq B^*(D)\label{eqn:lem1.3},
\end{align}
where (\ref{eqn:lem1.1}) follows from the fact that $M_n(X^n)$ takes at most $2^{nB}$ values; (\ref{eqn:lem1.2}) follows from the definition of $B^*(\cdot)$; (\ref{eqn:lem1.3}) follows from the monotonicity of $B^*(\cdot)$ and the fact that $\mathbb E\,d(\theta^n,M_n(X^n))\leq D$.
\end{proof}

\begin{proof}[Proof of Lemma \ref{lem:normalcase}]
Suppose that $\tilde\theta^n$ satisfies the conditions in problem (\ref{optimization}). Write $\gamma=c^2/(\sigma^2+c^2)$. For $i=1,2,\dots,n$, consider the decomposition
\begin{align*}
\mathbb E (\theta_i-\tilde\theta_i)^2&=\mathbb E (\theta_i-\gamma X_i+\gamma X_i-\tilde\theta_i)^2\\
&=\mathbb E(\theta_i-\gamma X_i)^2+\mathbb E(\tilde\theta_i-\gamma X_i)^2-2\mathbb E\left((\theta_i-\gamma X_i)(\tilde\theta_i-\gamma X_i)\right)\\
&=\frac{\sigma^2c^2}{\sigma^2+c^2}+\mathbb E(\tilde\theta_i-\gamma X_i)^2.
\end{align*}
The last equality follows from
\[
\mathbb E\left((\tilde\theta_i-\gamma X_i)(\theta_i-\gamma X_i)\right)=\mathbb E\left(\mathbb E(\theta_i-\gamma X_i\given X_i)\mathbb E(\tilde\theta_i-\gamma X_i\given X_i)\right)=0,
\]
where we have used the fact that $\theta_i\to X_i\to\tilde\theta_i$ is a Markov chain and that $\mathbb E(\theta_i\given X_i)=\gamma X_i$. Summing over $i=1,\dots,n$, we have
\[
\mathbb E\,d(\theta^n,\tilde\theta^n)=\frac{\sigma^2c^2}{\sigma^2+c^2}+\mathbb E\,d(\tilde\theta^n,\gamma X^n).
\]
A lower bound on the mutual information can be obtained as
\begin{align}
\frac{1}{n}I(X^n;\tilde\theta^n)&=\frac{1}{n}I(\gamma X^n;\tilde\theta^n)\geq\frac{1}{n}\sum_{i=1}^nI(\gamma X_i;\tilde\theta_i)\nonumber\\
&=\frac{1}{n}\sum_{i=1}^n\left(h(\gamma X_i)-h(\gamma X_i\given \tilde\theta_i)\right)\nonumber\\
&=\frac{1}{n}\sum_{i=1}^n\left(h(\gamma X_i)-h(\gamma X_i-\tilde\theta_i\given \tilde\theta_i)\right)\nonumber\\
&\geq\frac{1}{n}\sum_{i=1}^n\left(h(\gamma X_i)-h(\gamma X_i-\tilde\theta_i)\right)\nonumber\\
&\geq\frac{1}{n}\sum_{i=1}^n\left(h(\gamma X_i)-h\left(\mathcal N(0,\mathbb E(\gamma X_i-\tilde\theta_i)^2\right)\right)\label{eqn:lem2.5}\\
&=\frac{1}{n}\sum_{i=1}^n\left(\frac{1}{2}\log\frac{c^4}{\sigma^2+c^2}-\frac{1}{2}\log\mathbb E(\gamma X_i-\tilde\theta_i)^2\right)\nonumber\\
&=\frac{1}{2}\log\frac{c^4}{\sigma^2+c^2}-\frac{1}{n}\sum_{i=1}^n\frac{1}{2}\log\mathbb E(\gamma X_i-\tilde\theta_i)^2\nonumber\\
&\geq\frac{1}{2}\log\frac{c^4}{\sigma^2+c^2}-\frac{1}{2}\log\mathbb E\,d(\tilde\theta^n,\gamma X^n)\label{eqn:lem2.8}\\
&=\frac{1}{2}\log\frac{\frac{c^4}{\sigma^2+c^2}}{\mathbb E\,d(\theta^n,\tilde\theta^n)-\frac{\sigma^2c^2}{\sigma^2+c^2}}\nonumber
\end{align}
where (\ref{eqn:lem2.5}) follows from the fact that the normal distribution maximizes the entropy for a given second moment, 
(\ref{eqn:lem2.8}) follows from the concavity of the $\log$ function,
and the other inequalities follow from the properties of
mutual information and entropy.
Since $\mathbb E\, d(\theta^n,\tilde\theta^n)\leq D$, we have 
\[
\frac{1}{n}I(X^n;\tilde\theta^n)\geq\frac{1}{2}\log\frac{\frac{c^4}{\sigma^2+c^2}}{D-\frac{\sigma^2c^2}{\sigma^2+c^2}}.
\]
On the other hand, a calculation shows that the following joint distribution
\begin{equation}
\tilde\theta^n\sim\mathcal N\left(0,\gamma^2(\sigma^2+c^2-D)I_n\right),\quad X^n\sim\mathcal N\left(\tilde\theta^n/\gamma,DI_n\right),\quad\theta^n\sim\mathcal N(\gamma X^n,\gamma\sigma^2I_n).\label{eqn:testdist}
\end{equation}
achieves the lower bound, which concludes the proof.
\end{proof} 

\begin{proof}[Proof of Theorem \ref{thm:lowerbound}]
Suppose that $M_n$ is an $(n,B)$-rate estimation code. 
Let $\pi_n$, the prior on $\theta^n$, be $\mathcal N(0,c^2I_n)$.
According to Lemmas \ref{lemmaratedistortion} and \ref{lem:normalcase}
\begin{align*}
\frac{\sigma^2c^2}{\sigma^2+c^2}+\frac{c^4}{\sigma^2+c^2}2^{-2B}&=\int \mathbb E_{X^n}d(\theta^n,\tilde\theta^n)d\pi_n(\theta^n)\\
&\leq \int \mathbb E_{X^n}d(\theta^n,M_n(X^n))d\pi_n(\theta^n),
\end{align*}
where $\tilde\theta^n$ follows the distribution specified in
(\ref{eqn:testdist}). It then suffices to show that as $n\to\infty$
\[
\int \mathbb E_{X^n}d(\theta^n,M_n(X^n))d\pi_n(\theta^n)\leq \sup_{\theta^n\in\Theta_n(c)}\mathbb E_{X^n}d(\theta^n,M_n(X^n)).
\]
In fact, if the above inequality holds, taking a supremum over $M_n\in\mathcal M_{n,B}$ gives the desired lower bound.
Recall that
$\Theta_n(c)=\{\theta^n:\frac{1}{n}\sum_{i=1}^n\theta_i^2\leq c^2\}$. 
Paralleling the argument in \cite{nussbaum1999minimax,wasserman:2006}, we have 
\begin{align*}
&\int \mathbb E_{X^n}d(\theta^n,M_n(X^n))d\pi_n(\theta^n) \\
&=\int_{\Theta_n(c)} \mathbb E_{X^n}d(\theta^n,M_n(X^n))d\pi_n(\theta^n) +\int_{\overline{\Theta_n(c)}} \mathbb E_{X^n}d(\theta^n,M_n(X^n))d\pi_n(\theta^n)\\
&\leq \sup_{\Theta_n(c)} \mathbb E_{X^n}d(\theta^n,M_n(X^n))+\int_{\overline{\Theta_n(c)}} \mathbb E_{X^n}d(\theta^n,M_n(X^n))d\pi_n(\theta^n).
\end{align*}
It remains to show that
\[
\int_{\overline{\Theta_n(c)}} \mathbb E_{X^n}d(\theta^n,M_n(X^n))d\pi_n(\theta^n)\longrightarrow 0.
\]
where $\overline{\Theta_n(c)}$ denotes the complement of $\Theta_n(c)$.
For a fixed $\delta\in(0,1)$, let $\pi_{n,\delta}$ be a $\mathcal N(0,c^2\delta^2I_n)$ prior on $\theta^n$. 
Replacing $\pi_n$ by $\pi_{n,\delta}$, and using the Cauchy-Schwarz inequality, we get 
\begin{align}
&\int_{\overline{\Theta_n(c)}} \mathbb E_{X^n}d(\theta^n,M_n(X^n))d\pi_{n,\delta}(\theta^n)\nonumber\\
&\leq 2\int_{\overline{\Theta_n(c)}}\frac{1}{n}\|\theta^n\|^2d\pi_{n,\delta}(\theta^n)+2\int_{\overline{\Theta_n(c)}}\mathbb E_{X^n}\frac{1}{n}\|M(X^n)\|^2d\pi_{n,\delta}(\theta^n)\nonumber\\
&\leq \frac{2}{n}\sqrt{\pi_{n,\delta}\left(\overline{\Theta_n(c)}\right)}\sqrt{\mathbb E_{\pi_{n,\delta}}\|\theta^n\|^4}+2c^2\pi_{n,\delta}\left(\overline{\Theta_n(c)}\right).\label{eqn:cauchyschwarz}
\end{align}
Now we bound the two terms in the formula above. First,
\begin{align*}
\pi_{n,\delta}\left(\overline{\Theta_n(c)}\right)&=\mathbb P\left(\frac{1}{n}\sum_{i=1}^n\theta_i^2>c^2\right)\\
&=\mathbb P\left(\frac{1}{n}\sum_{i=1}^n\left(\left(\frac{\theta_i}{\delta c}\right)^2-1\right)>\frac{1-\delta^2}{\delta^2}\right)\\
&\leq 2\exp\left(-\frac{n(1-\delta^2)^2}{8\delta^4}\right)
\end{align*}
where the last inequality is due to the following large deviation inequality: if $Z_1,\dots,Z_n\sim\mathcal N(0,1)$ and $0<t<1$, then
\[
\mathbb P\left(\left|\frac{1}{n}\sum_{i=1}^n(Z_i^2-1)\right|>t\right)\leq 2e^{-nt^2/8}.
\]
Next, we note that
\begin{align*}
\mathbb E_{\pi_{n,\delta}}\|\theta^n\|^4&=\sum_{i=1}^n\mathbb E_{\pi_{n,\delta}}\theta_i^4+\sum_{i\neq j}\mathbb E_{\pi_{n,\delta}}\theta_i^2\cdot\mathbb E_{\pi_{n,\delta}}\theta_j^2\\
&=n\mathbb E_{\pi_{n,\delta}}\theta_1^4+{n \choose 2}c^2\delta^2\\
&=O(n^2). 
\end{align*}
Therefore, we have from (\ref{eqn:cauchyschwarz})
\begin{align*}
&\int_{\overline{\Theta_n(c)}} \mathbb E_{X^n}d(\theta^n,M_n(X^n))d\pi_{n,\delta}(\theta^n)\\
&\leq \frac{2}{n}\cdot \sqrt{2}\exp\left(-\frac{n(1-\delta^2)^2}{16\delta^4}\right)O(n)+2c^2\exp\left(-\frac{n(1-\delta^2)^2}{8\delta^4}\right)\longrightarrow
0
\end{align*}
for any $\delta\in(0,1)$. The conclusion then follows by letting $\delta\uparrow1$.
\end{proof}

\def\P{{\mathbb P}}
\allowdisplaybreaks

\subsection{Proof of Theorem \ref{thm:codingmethod}}
\begin{proof}[Proof of Theorem \ref{thm:codingmethod}]
Suppose that $\|\theta^n\|/n=b^2\leq c^2$ and that $X_i\sim\mathcal N(\theta_i,\sigma^2)$. Writing $\hat b^2=\|X^n\|^2/n-\sigma^2$ and $\hat\gamma=\frac{\hat b^2}{\sigma^2+\hat b^2}$, we have the decomposition of the loss
\begin{align*}
d(\theta^n,\check\theta^n)&=\frac{1}{n}\left\|\check\theta^n-\theta^n\right\|^2\\
&=\frac{1}{n}\left\|\check\theta^n-\hat\gamma X^n+\hat\gamma X^n-\theta^n\right\|^2\\
&=\underbrace{\frac{1}{n}\left\|\check\theta^n-\hat\gamma X^n\right\|^2}_{A_1}+\underbrace{\frac{1}{n}\left\|\hat\gamma X^n-\theta^n\right\|^2}_{A_2}+\underbrace{\frac{2}{n}\langle\check\theta^n-\hat \gamma X^n,\hat\gamma X^n-\theta^n\rangle}_{A_3}.
\end{align*}
\begin{enumerate}[(i)]
\item Term $A_1$ characterizes the quantization error. It has the following decomposition
\begin{align*}
A_1=\frac{1}{n}\|\check\theta^n\|^2+\frac{1}{n}\hat\gamma^2\|X^n\|^2-\frac{2}{n}\langle \check\theta^n,\hat\gamma X^n \rangle.
\end{align*}
By Lemma \ref{lem:concentration} and Lemma \ref{lem:extremeangle} below, we have
\begin{align*}
\frac{1}{n}\|X^n\|^2-b^2-\sigma^2=O_P\left(\frac{1}{\sqrt{n}}\right),\quad\frac{\langle X^n,\check X^n\rangle}{\|X\|}-\sqrt{1-2^{-2B}}=O_P\left(\frac{\log n}{n}\right),
\end{align*}
and therefore
\begin{align*}
\frac{1}{n}\|\check\theta^n\|^2&=\frac{\check b^4(1-2^{-2B})}{\sigma^2+\check b^2}=\frac{b^4}{\sigma^2+b^2}(1-2^{-2B})+O_P\left(\frac{1}{\sqrt{n}}\right),\\
\frac{1}{n}\hat\gamma^2\|X^n\|^2&=\frac{\hat b^4}{\sigma^2+\hat b^2}=\frac{b^4}{\sigma^2+b^2}+O_P\left(\frac{1}{\sqrt{n}}\right),\\
\frac{2}{n}\langle \check\theta^n,\hat\gamma X^n \rangle&=\frac{2}{n}\frac{\hat b^2}{\sigma^2+\hat b^2} \sqrt{\frac{n\check b^4(1-2^{-2B})}{\sigma^2+\check b^2}}\cdot\langle X^n, \check X^n\rangle\\
&=\frac{2\hat b^2\check b^2\sqrt{1-2^{-2B}}}{\sqrt{(\sigma^2+\hat b^2)(\sigma^2+\check b^2)}}
\frac{\langle X,\check X\rangle}{\|X\|}\\
&= \frac{2b^4}{\sigma^2+b^2}(1-2^{-2B})+O_P\left(\frac{1}{\sqrt{n}}\right).
\end{align*}
which, combined together, gives us
\begin{align*}
A_1&= \frac{b^4}{\sigma^2+b^2}(1-2^{-2B})+\frac{b^4}{\sigma^2+b^2}-\frac{2b^4}{\sigma^2+b^2}(1-2^{-2B})+O_P\left(\frac{1}{\sqrt{n}}\right)\\
&=\frac{b^4}{\sigma^2+b^2}2^{-2B}+O_P\left(\frac{1}{\sqrt{n}}\right).
\end{align*}

\item Term $A_2$ does not involve random codebook, and is essentially the average loss of a James-Stein-type estimator. Suppose that $A_n$ is an $n\times n$ orthonormal matrix such that $A_n\theta^n=(\sqrt{n}b,0,\dots,0)^T$, which we will denote by $\tau^n$. Let $Y^n=A_nX^n$. Then $Y^n\sim\mathcal N(\tau^n,\sigma^2I)$ and $\|X^n\|=\|Y^n\|$. Expressing $A_2$ in terms of $Y^n$, we have
\begin{align*}
A_2&=\frac{1}{n}\left\|\hat \gamma X^n-\theta^n\right\|^2\\
&=\frac{1}{n}\|\hat\gamma A_nX^n-A_n\theta^n\|^2\\
&=\frac{1}{n}\|\hat\gamma Y^n-\tau^n\|^2\\
&=\frac{1}{n}\hat\gamma^2\|Y^n\|^2-2\hat\gamma b\frac{Y_1}{\sqrt{n}}+b^2\\
&=\frac{b^2\sigma^2}{\sigma^2+b^2}+O_P\left(\frac{1}{\sqrt{n}}\right).
\end{align*}
The last equality is because
\begin{align*}
\frac{1}{n}\|Y^n\|^2-b^2-\sigma^2=O_P\left(\frac{1}{\sqrt{n}}\right),\quad\frac{Y_1}{\sqrt{n}}-b =O_P\left(\frac{1}{\sqrt{n}}\right).
\end{align*}

\item Finally, it can be shown that the cross term satisfies
  $A_3=O_P(\sqrt{\frac{\log n}{n}})$ by exploiting the geometry of the
  vectors, and using the fact that most vectors are nearly orthogonal
  to each other in a high dimensional space.  In fact, write
\[
\hat\theta^n=\sqrt{\frac{n\hat b^4(1-2^{-2B})}{\hat b^2+\sigma^2}}\cdot \check X^n.
\]
Then in the decomposition
\[
\frac{2}{n}\langle\check\theta^n-\hat \gamma X^n,\hat\gamma X^n-\theta^n\rangle=\frac{2}{n}\langle\check\theta^n-\hat\theta^n,\hat\gamma X^n-\theta^n\rangle + \frac{2}{n}\langle \hat\theta^n,\hat\gamma X^n-\theta^n \rangle - \frac{2}{n}\langle \hat\gamma X^n,\hat\gamma X^n-\theta^n\rangle,
\]
the first term is $O_P(\frac{1}{\sqrt{n}})$, since
$\frac{1}{\sqrt{n}}\|\check\theta^n-\hat\theta^n\|=O_P(\frac{1}{\sqrt{n}})$
and $\frac{1}{\sqrt{n}}\|\hat\gamma X^n-\theta^n\|$ has bounded second
moment, and the third term is
\begin{align*}
\frac{2}{n}\langle \hat\gamma X^n,\hat\gamma X^n-\theta^n\rangle&=\frac{2}{n}\langle \hat\gamma Y^n, \hat\gamma Y^n-\tau^n\rangle\\
&=\frac{2}{n}\hat\gamma^2\|Y^n\|^2-2\hat\gamma b\frac{Y_1}{\sqrt{n}}=O_P\left(\frac{1}{\sqrt{n}}\right).
\end{align*}
Now consider the second term
\begin{align*}
\frac{2}{n}\langle\hat\theta^n,\hat\gamma X^n-\theta^n\rangle=\frac{2}{n}\langle \hat\theta^n, \tilde\gamma X^n-\theta^n \rangle+\frac{2}{n}(\hat\gamma-\tilde\gamma)\langle \hat\theta^n, X^n \rangle
\end{align*}
where
\[
\tilde\gamma\triangleq\frac{\sum_{i=1}^n\theta_iX_i}{\|X^n\|^2},\quad\text{satisfying }\frac{2}{n}(\hat\gamma-\tilde\gamma)\langle \hat\theta^n, X^n \rangle=O_P\left(\frac{1}{\sqrt{n}}\right)\text{ and }\langle X^n,\tilde\gamma X^n-\theta^n \rangle=0.
\]
Thus, we are left with one last term to analyze:
\begin{align*}
\frac{2}{n}\langle \hat\theta^n, \tilde\gamma X^n-\theta^n \rangle&=\frac{2}{n}\sqrt{\frac{n\hat b^4(1-2^{-2B})}{\hat b^2+\sigma^2}}\langle \check X^n,\tilde\gamma X^n-\theta^n\rangle\\
&=\sqrt{\frac{4\hat b^4(1-2^{-2B})}{\hat b^2+\sigma^2}}\langle \check X^n,\frac{1}{\sqrt{n}}(\tilde\gamma X^n-\theta^n)\rangle
\end{align*}
The scaling factor in front of the inner product is some constant plus
an $O_P(\frac{1}{\sqrt{n}})$ term, so we consider the inner
product. Notice that the projection of $\check X^n$ onto the
orthogonal space of $X^n$, $\text{Proj}_{{X^n}^\perp}(\check X^n)$, is
independent of $X^n$. Furthermore, by symmetry,
$\text{Proj}_{{X^n}^\perp}(\check X^n)$ has a spherical distribution
in $\mathbb R^{n-1}$, and has a length
$\sqrt{1-2^{-2B}}+O_P(\frac{\log n}{n})$. That is, we can write
\[
\text{Proj}_{{X^n}^\perp}(\check X^n)=L_n\cdot U^n
\]
where $U^n$ follows the uniform distribution on sphere $\mathbb S^{n-1}$ and $L_n=\sqrt{1-2^{-2B}}+O_P(\frac{\log n}{n})$. Conditioning on $X^n$, since $\langle X^n,\tilde\gamma X^n-\theta^n\rangle=0$, we have
\begin{align*}
\MoveEqLeft \mathbb P\left(\langle \check X^n,\frac{1}{\sqrt{n}}(\tilde\gamma
  X^n-\theta^n)\rangle > t \given X^n=x^n\right)\\
&=\mathbb
P\left(\langle \text{Proj}_{{X^n}^\perp}(\check
  X^n),\text{Proj}_{{X^n}^\perp}(\frac{1}{\sqrt{n}}(\tilde\gamma
  X^n-\theta^n))\rangle > t \given X^n=x^n\right)\\
&=\mathbb P\left(L_n\|\frac{1}{\sqrt{n}}(\tilde\gamma x^n-\theta^n)\|\langle U^n,e^n\rangle>t\right)\\
&\leq K_1\sqrt{n}\left(1-\frac{t^2}{K_2\|\frac{1}{\sqrt{n}}(\tilde\gamma x^n-\theta^n)\|^2}\right)^{\frac{n-2}{2}}.
\end{align*}
where $K_1$ and $K_2$ are positive constants, and the last inequality
follows from Lemma \ref{lem:orthog} below. It then follows that
\begin{align*}
\MoveEqLeft \mathbb P\left(\langle \check X^n,\frac{1}{\sqrt{n}}(\tilde\gamma
  X^n-\theta^n)\rangle > t\right)\\
&=\int \mathbb P\left(\langle \check
  X^n,\frac{1}{\sqrt{n}}(\tilde\gamma X^n-\theta^n)\rangle > t\given X^n=x^n\right)p_{X^n}(x^n)dx^n\\
&\leq K_1\sqrt{n}\int \left(1-\frac{t^2}{K_2\|\frac{1}{\sqrt{n}}(\tilde\gamma x^n-\theta^n)\|^2}\right)^{\frac{n-2}{2}}p_{X^n}(x^n)dx^n\\
&\leq K_1\sqrt{n}\left(1-\frac{t^2}{K_2'}\right)^{\frac{n-2}{2}}+\mathbb P\left(\frac{1}{n}\|X^n\|^2>b^2+\sigma^2+K_3\right).
\end{align*}
for positive constants $K_1$, $K_2$ and $K_3$. This implies that
$\langle \check X^n,\frac{1}{\sqrt{n}}(\tilde\gamma
X^n-\theta^n)\rangle=O_P(\sqrt{\frac{\log n}{n}})$ and thus $A_3 =O_P(\sqrt{\frac{\log n}{n}})$.
\end{enumerate}
Combining the above analyses for $A_1$, $A_2$ and $A_3$ together gives us the theorem. 
\end{proof}

\begin{lemma}\label{lem:concentration}
Suppose that $X_i\overset{\text{ind.}}{\sim} N(\theta_i,\sigma^2)$, for $i=1,\dots,n$ and that $\frac{1}{n}\sum_{i=1}^n\theta_i^2=b^2$. Then
\begin{align*}
\mathbb P\left(\left|\frac{1}{n}\sum_{i=1}^nX_i^2-b^2-\sigma^2\right|\geq t\right)\leq2\exp\left(-\frac{nt^2}{32\sigma^4}\right)+\frac{8\sigma b}{\sqrt{2\pi nt^2}}\exp\left(-\frac{nt^2}{32\sigma^2b^2}\right).
\end{align*}
Specifically, if we write $\hat b^2=\|X\|^2/n-\sigma^2$, we have $\hat b^2-b^2=O_P(\frac{1}{\sqrt{n}})$.
\end{lemma}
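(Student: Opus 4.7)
The plan is to write $X_i = \theta_i + \sigma Z_i$ with $Z_i \overset{\text{ind.}}{\sim} \mathcal N(0,1)$, which gives the decomposition
\[
\frac{1}{n}\sum_{i=1}^n X_i^2 - b^2 - \sigma^2 \;=\; \underbrace{\frac{\sigma^2}{n}\sum_{i=1}^n (Z_i^2 - 1)}_{T_1} \;+\; \underbrace{\frac{2\sigma}{n}\sum_{i=1}^n \theta_i Z_i}_{T_2}.
\]
By a union bound, the event $\{|\frac{1}{n}\sum X_i^2 - b^2 - \sigma^2| \geq t\}$ is contained in $\{|T_1| \geq t/2\}\cup \{|T_2| \geq t/2\}$, so it suffices to bound the tails of $T_1$ and $T_2$ separately and add them.

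For $T_1$, I would invoke the chi-square large-deviation inequality already used in the proof of Theorem \ref{thm:lowerbound}, namely $\P(|n^{-1}\sum(Z_i^2-1)| > s) \leq 2\exp(-ns^2/8)$ for $s \in (0,1)$, applied with $s = t/(2\sigma^2)$. This yields the first term $2\exp(-nt^2/(32\sigma^4))$. For $T_2$, I would use the fact that $T_2 \sim \mathcal N(0, 4\sigma^2 b^2/n)$ since $\sum \theta_i^2 = nb^2$, and then apply Mills' ratio inequality $\P(|\mathcal N(0,\tau^2)| > s) \leq \frac{2\tau}{s\sqrt{2\pi}}\exp(-s^2/(2\tau^2))$ with $\tau = 2\sigma b/\sqrt n$ and $s = t/2$. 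A short calculation gives $\frac{8\sigma b}{t\sqrt{2\pi n}}\exp(-nt^2/(32\sigma^2 b^2)) = \frac{8\sigma b}{\sqrt{2\pi n t^2}}\exp(-nt^2/(32\sigma^2 b^2))$, which is exactly the second term in the stated bound.

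For the final claim that $\hat b^2 - b^2 = O_P(1/\sqrt n)$, note that $\hat b^2 - b^2 = n^{-1}\sum X_i^2 - b^2 - \sigma^2$, so I would apply the concentration inequality with $t = C/\sqrt n$: the first summand becomes $2\exp(-C^2/(32\sigma^4))$ and the second summand is at most $\frac{8\sigma b}{C\sqrt{2\pi}}\exp(-C^2/(32\sigma^2 b^2))$, both of which can be made arbitrarily small by choosing $C$ large, which is the definition of $O_P(1/\sqrt n)$.

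There is no real obstacle here; the only thing to take care of is matching the constants in the Mills' bound so that the denominator appears in the form $\sqrt{2\pi n t^2}$ rather than $t\sqrt{2\pi n}$ (these are equal but the former is the form stated). The split $t/2$ into $T_1$ and $T_2$ is the natural symmetric choice and produces the constants $32$ in both exponents.
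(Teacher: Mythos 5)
Your proof is correct and follows essentially the same route as the paper: write $X_i=\theta_i+\sigma Z_i$, split via the union bound at $t/2$, use the chi-square large-deviation inequality on $\sum(Z_i^2-1)$ and the Gaussian tail (Mills ratio) bound on $\sum\theta_i Z_i$, then take $t=C/\sqrt n$ for the $O_P$ claim. The constants work out exactly as you computed, so there is nothing to add.
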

\begin{proof} Writing $X_i=\theta_i+\epsilon_i$, we have
\begin{align*}
\MoveEqLeft \mathbb
P\left(\left|\frac{1}{n}\sum_{i=1}^nX_i^2-b^2-\sigma^2\right|>t\right)\\
&=\mathbb P\left(\left|\frac{1}{n}\sum_{i=1}^n(\epsilon_i^2-\sigma^2)+\frac{2}{n}\sum_{i=1}^n\theta_i\epsilon_i\right|>t\right)\\
&\leq\mathbb P\left(\left|\frac{1}{n}\sum_{i=1}^n(\epsilon_i^2-\sigma^2)\right|>\frac{t}{2}\right)+\mathbb P\left(\left|\frac{2}{n}\sum_{i=1}^n\theta_i\epsilon_i\right|>\frac{t}{2}\right)\\
&\leq\mathbb P\left(\left|\frac{1}{n}\sum_{i=1}^n\left(\left(\frac{\epsilon_i}{\sigma}\right)^2-1\right)\right|>\frac{t}{2\sigma^2}\right)+\mathbb P\left(\left|\frac{2}{n}\mathcal N(0,n\sigma^2b^2)\right|>\frac{t}{2}\right)\\ 
&\leq2\exp\left(-\frac{nt^2}{32\sigma^4}\right)+\frac{8\sigma b}{\sqrt{2\pi nt^2}}\exp\left(-\frac{nt^2}{32\sigma^2b^2}\right)
\end{align*}
where the last inequality follows from the previously mentioned large
deviation inequality and the upper tail inequality for the normal distribution.
\end{proof}

\begin{lemma}[Lemma 4.1 from \cite{tony2012phase}]\label{lem:pearson}
Suppose that $Y$ is uniformly distributed on the $n$-dimensional unit sphere $\mathbb S^{n-1}$. For $x\in\mathbb R^n$ such that $\|x\|_2=1$, the inner product $\rho=\langle x,Y\rangle$ between $x$ and $Y$has density function
\[
f(\rho)=\frac{1}{\sqrt \pi}\frac{\Gamma(\frac{n}{2})}{\Gamma(\frac{n-1}{2})}(1-\rho^2)^{\frac{n-3}{2}}I(|\rho|<1).
\]
\end{lemma}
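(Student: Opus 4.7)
The plan is to exploit the rotational symmetry of the uniform distribution on $\mathbb{S}^{n-1}$ to reduce the problem to a one-dimensional marginal, and then compute that marginal density by a slicing argument on the sphere. By orthogonal invariance of the uniform measure, for any unit vector $x$ I can choose a rotation that sends $x$ to $e_1$, so that $\rho = \langle x, Y\rangle$ has the same distribution as $Y_1$. It therefore suffices to find the marginal density of the first coordinate of a uniform point on $\mathbb{S}^{n-1}$.

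Next, I would fix a height $\rho \in (-1,1)$ and consider the intersection of $\mathbb{S}^{n-1}$ with the hyperplane $\{y : y_1 = \rho\}$. This intersection is itself an $(n-2)$-dimensional sphere of radius $\sqrt{1-\rho^2}$, whose surface area scales as $(1-\rho^2)^{(n-2)/2}$. Because the tangent plane of $\mathbb{S}^{n-1}$ is tilted relative to $\{y_1 = \text{const}\}$, an infinitesimal change $d\rho$ corresponds to an arc length $d\rho/\sqrt{1-\rho^2}$ along a meridian, so the surface element picks up an additional factor $(1-\rho^2)^{-1/2}$. Multiplying these contributions and dividing by the total surface area of $\mathbb{S}^{n-1}$ shows that the density of $Y_1$ with respect to Lebesgue measure on $(-1,1)$ is proportional to $(1-\rho^2)^{(n-3)/2}$.

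The final step is normalization. Using the substitution $u = \rho^2$, I would reduce $\int_{-1}^1 (1-\rho^2)^{(n-3)/2}\,d\rho$ to a Beta integral equal to $B(1/2,(n-1)/2) = \sqrt{\pi}\,\Gamma((n-1)/2)/\Gamma(n/2)$, so the normalizing constant is $\Gamma(n/2)/(\sqrt{\pi}\,\Gamma((n-1)/2))$, matching the stated density exactly.

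There is no serious obstacle here; the only step that demands any care is the Jacobian in the slicing argument, which is a standard computation of the surface element in spherical-type coordinates. A tidy alternative route, should the geometric picture feel cumbersome, is to generate $Y$ as $Z/\|Z\|$ with $Z \sim \mathcal{N}(0,I_n)$ and observe that $\rho^2 = Z_1^2/\|Z\|^2 \sim \mathrm{Beta}(1/2,(n-1)/2)$; a change of variables from $\rho^2$ back to $\rho$, combined with the symmetry $\rho \stackrel{d}{=} -\rho$, yields the same density and may be substituted if a less geometric argument is desired.
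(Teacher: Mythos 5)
Your derivation is correct, and both routes you sketch (the slicing/Jacobian argument and the Gaussian normalization trick giving $\rho^2 \sim \mathrm{Beta}(1/2,(n-1)/2)$) are standard and sound. Note, however, that the paper does not prove this lemma at all — it simply cites it as Lemma 4.1 of Cai and Jiang (2012) — so there is no in-paper argument to compare against; your write-up supplies a proof where the authors were content to reference the literature.

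One small point worth being explicit about if you ever write this up formally: the Jacobian factor $(1-\rho^2)^{-1/2}$ comes from parametrizing the sphere by $(\rho, \omega)$ with $\omega \in \sqrt{1-\rho^2}\,\mathbb S^{n-2}$ and computing that the surface measure decomposes as $d\sigma_{n-1} = (1-\rho^2)^{(n-3)/2}\,d\rho\,d\sigma_{n-2}(\omega')$ (with $\omega'$ on the unit $(n-2)$-sphere), which is precisely the co-area formula for the height function $y \mapsto y_1$; your verbal justification via meridian arc length is the right intuition but is slightly informal. The Gaussian route avoids this geometric bookkeeping entirely and is arguably the cleaner of the two.
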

\begin{lemma}\label{lem:extremeangle}
Suppose that $p=e^{n\beta}$ and $Y_1,\dots,Y_p$ are independent and identically distributed with a uniform distribution on the $n$-dimensional sphere $\mathbb S^{n-1}$. For a fixed unit vector $x\in\mathbb R^n$, let $\rho_i=\langle x,Y_i\rangle$ and $L_n=\max_{1\leq i\leq p}\,\rho_i$ . Then $L_n\to \sqrt{1-e^{-2\beta}}$ in probability as $n\to\infty$. Furthermore, $L_n-\sqrt{1-e^{-2\beta}}=O_P(\frac{\log n}{n})$.
\end{lemma}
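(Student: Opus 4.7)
My plan is to control the one-sample tail $\P(\rho_i > t)$ via Lemma \ref{lem:pearson} and a Laplace-style asymptotic, then use independence to lift the estimate to the maximum $L_n$. First, by Lemma \ref{lem:pearson},
\[
\P(\rho_i > t) = \frac{\Gamma(n/2)}{\sqrt{\pi}\,\Gamma((n-1)/2)} \int_t^1 (1-\rho^2)^{(n-3)/2}\, d\rho.
\]
For $t$ in any compact subset of $(0,1)$ the integrand concentrates at the left endpoint $\rho=t$, and a Laplace-type analysis --- the logarithm of the integrand has slope $-(n-3)t/(1-t^2)$ at $\rho=t$, giving a contribution of order $(1-t^2)^{(n-1)/2}/((n-3)t)$ --- combined with the Stirling ratio $\Gamma(n/2)/\Gamma((n-1)/2)=\sqrt{n/2}\,(1+O(1/n))$, yields
\[
\P(\rho_i > t) = \frac{(1-t^2)^{(n-1)/2}}{t\sqrt{2\pi n}}\bigl(1+o(1)\bigr).
\]
This single sharp estimate drives the rest of the argument.

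Write $t_*=\sqrt{1-e^{-2\beta}}$, so that $(1-t_*^2)^{(n-1)/2}=e^{-\beta(n-1)}$ and $p\cdot e^{-\beta(n-1)}=e^\beta$. For a perturbation $t_*+\delta$ with $\delta\to 0$, a Taylor expansion of $\log(1-(t_*+\delta)^2)$ about $\delta=0$ gives
\[
(1-(t_*+\delta)^2)^{(n-1)/2} = e^{-\beta(n-1)}\exp\!\Bigl(-t_* e^{2\beta}\,\delta\,(n-1) + O(n\delta^2)\Bigr).
\]
Combined with the one-sample asymptotic and the two standard bounds
\[
\P(L_n > s)\leq p\,\P(\rho_i > s), \qquad \P(L_n \leq s) \leq \exp\bigl(-p\,\P(\rho_i > s)\bigr),
\]
this shows that for any fixed $\epsilon>0$, $p\,\P(\rho_i > t_*+\epsilon)$ vanishes exponentially while $p\,\P(\rho_i > t_*-\epsilon)$ blows up exponentially in $n$. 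Together these give $L_n\to t_*$ in probability.

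For the $O_P(\log n/n)$ rate, I substitute $\delta=\pm c\log n/n$ and note that $n\delta^2=o(1)$, to obtain
\[
p\,\P\bigl(\rho_i > t_*\pm c\log n/n\bigr) = \frac{e^\beta}{t_*\sqrt{2\pi}}\,n^{-1/2\mp t_* c e^{2\beta}}\bigl(1+o(1)\bigr).
\]
Any choice $c>(2t_* e^{2\beta})^{-1}$ simultaneously makes $\P(L_n>t_*+c\log n/n)\leq O(n^{-1/2-t_* c e^{2\beta}})\to 0$ and $p\,\P(\rho_i > t_*-c\log n/n)=\Theta(n^{-1/2+t_* c e^{2\beta}})\to\infty$, which in turn forces $\P(L_n\leq t_*-c\log n/n)\leq\exp\bigl(-\Theta(n^{-1/2+t_* c e^{2\beta}})\bigr)\to 0$. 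Together these give $L_n-t_*=O_P(\log n/n)$.

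The main technical obstacle is the sharp one-sample asymptotic for $\P(\rho_i > t)$: I need not merely its leading exponential rate but enough control on the prefactor to see the $\log n$ factors survive after taking the extremum. A second-order Laplace expansion around $\rho=t$, valid uniformly for $t$ in a shrinking $O(\log n/n)$-neighborhood of $t_*$, handles this; once it is in hand, the union bound and product formula above deliver the rate with the correct constant.
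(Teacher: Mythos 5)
Your argument is correct and follows essentially the same route as the paper: both proofs start from the density in Lemma \ref{lem:pearson}, derive the one-sample Laplace asymptotic $\P(\rho_1>t)\sim(1-t^2)^{(n-1)/2}/(t\sqrt{2\pi n})$ via Stirling and concentration at the endpoint, and then lift to the maximum by raising $\P(\rho_1\le s)$ to the $p$-th power (which the paper does explicitly, and which you package as the union bound $\P(L_n>s)\le p\,\P(\rho_1>s)$ together with $\P(L_n\le s)\le\exp(-p\,\P(\rho_1>s))$). Substituting $s=t_*\pm c\log n/n$ and tracking the $n^{-1/2\mp t_*ce^{2\beta}}$ prefactor is exactly the paper's computation with $k_n=n/\log n$ written in slightly different variables, so the two proofs coincide in all essentials.
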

\begin{proof} Let $k_n=\frac{n}{\log n}$. For any fixed $u\in\mathbb R$
\begin{align*}
&\P\left(k_n\left(L_n-\sqrt{1-e^{-2\beta}}\right)\leq u\right)\\
&=\P\left(L_n\leq \frac{u}{k_n}+\sqrt{1-e^{-2\beta}}\right)\\
&=\P\left(\rho_1\leq\frac{u}{k_n}+\sqrt{1-e^{-2\beta}}\right)^p\\
&=\left(1-\int_{u/k_n+\sqrt{1-e^{-2\beta}}}^1\frac{1}{\sqrt{\pi}}\frac{\Gamma(\frac{n}{2})}{\Gamma(\frac{n-1}{2})}(1-\rho^2)^{\frac{n-3}{2}}d\rho\right)^p\\
&\sim\left(1-\frac{\sqrt{n}}{\sqrt{2\pi}(n-3)(\frac{u}{k_n}+\sqrt{1-e^{-2\beta}})}\left(1-\left(\frac{u}{k_n}+\sqrt{1-e^{-2\beta}}\right)^2\right)^{\frac{n-1}{2}}\right)^p\\
&\sim\exp\left(-p\cdot\frac{\sqrt{n}}{\sqrt{2\pi}(n-3)(\frac{u}{k_n}+\sqrt{1-e^{-2\beta}})}\left(1-\left(\frac{u}{k_n}+\sqrt{1-e^{-2\beta}}\right)^2\right)^{\frac{n-1}{2}} \right)\\
&\triangleq \exp (-M).
\end{align*}
Taking the logarithm of the exponent $M$, we get
\begin{align*}
& \log M =\log p+\frac{n-1}{2}\log\left(1-\left(\frac{u}{k_n}+\sqrt{1-e^{-2\beta}}\right)^2\right)+\log\frac{\sqrt{n}}{\sqrt{2\pi}(n-3)(\frac{u}{k_n}+\sqrt{1-e^{-2\beta}})}\\
&\sim n\beta +\frac{n-1}{2}\log\left(e^{-2\beta}-\frac{u^2}{k_n^2}-\frac{2u}{k_n}\sqrt{1-e^{-2\beta}}\right)-\frac{1}{2}\log n-\frac{1}{2}\log\left(1-e^{-2\beta}\right)-\frac{1}{2}\log(2\pi)\\
& \sim n\beta-(n-1)\beta-\frac{n-1}{2}\frac{u^2}{k_n^2}-(n-1)\frac{u}{k_n}\sqrt{1-e^{-2\beta}}-\frac{1}{2}\log n-\frac{1}{2}\log\left(1-e^{-2\beta}\right)-\frac{1}{2}\log(2\pi)\\
& \sim \beta-\left(u\sqrt{1-e^{-2\beta}}+\frac{1}{2}\right)\log n-\frac{1}{2}\log\left(1-e^{-2\beta}\right)-\frac{1}{2}\log(2\pi).
\end{align*}
If $u>0$, then as $n\to\infty$, $M\to0$, and thus $\P\left(k_n\left(L_n-\sqrt{1-e^{-2\beta}}\right)\leq u\right)\to0$. If $u<-\frac{1}{2\sqrt{1-2^{-2\beta}}}$, then as $n\to\infty$, $M\to\infty$, and hence $\P\left(k_n\left(L_n-\sqrt{1-e^{-2\beta}}\right)\leq u\right)\to1$. We can then conclude that $|L_n-\sqrt{1-e^{-2\beta}}|=O_P(\frac{\log n}{n})$.
\end{proof}

\begin{lemma}
\label{lem:orthog}
Let $U$ have a uniform distribution on the unit sphere $\mathbb
S^{n-1}$ and let $x\in\mathbb R^n$ be a fixed vector. Then
\[
\mathbb P\left(|\langle U,x\rangle|>\epsilon\right)\leq K\sqrt{n}(1-\epsilon^2)^{\frac{n-2}{2}}.
\]
for all $n\geq 2$ and $\epsilon\in (0,1)$, where $K$ is a universal constant. Therefore,
\[
\langle U,x\rangle=O_P\left(\sqrt{\frac{\log n}{n}}\right).
\]
\end{lemma}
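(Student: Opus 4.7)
The plan is to apply the spherical density formula of Lemma \ref{lem:pearson} and estimate the resulting tail integral. By rotation invariance of $U$ and a direct scaling it suffices to treat $\|x\|=1$, since the event $|\langle U,x\rangle|>\epsilon$ is vacuous when $\|x\|<\epsilon$ and otherwise reduces to the unit-vector case. With $\|x\|=1$, Lemma \ref{lem:pearson} gives
\[
\mathbb P(|\langle U, x\rangle| > \epsilon) = \frac{2}{\sqrt{\pi}}\,\frac{\Gamma(n/2)}{\Gamma((n-1)/2)} \int_\epsilon^1 (1-\rho^2)^{(n-3)/2}\,d\rho.
\]

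Two ingredients then suffice. First, Wendel's inequality (equivalently a short Stirling estimate) bounds $\Gamma(n/2)/\Gamma((n-1)/2) \leq C\sqrt{n}$ for all $n\geq 2$ with $C$ an absolute constant. Second, rather than the naive bound $\int_\epsilon^1 (1-\rho^2)^{(n-3)/2}\,d\rho \leq (1-\epsilon^2)^{(n-3)/2}$, which gives the weaker exponent $(n-3)/2$, I would insert the factor $\rho/\epsilon \geq 1$ into the integrand so that the integral evaluates in closed form:
\[
\int_\epsilon^1 (1-\rho^2)^{(n-3)/2}\,d\rho \;\leq\; \frac{1}{\epsilon}\int_\epsilon^1 \rho\,(1-\rho^2)^{(n-3)/2}\,d\rho \;=\; \frac{(1-\epsilon^2)^{(n-1)/2}}{\epsilon(n-1)}.
\]
Combining the two bounds yields $\mathbb P(|\langle U,x\rangle|>\epsilon) \leq K'\sqrt{n}(1-\epsilon^2)^{(n-1)/2}/(\epsilon(n-1))$.

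To match the exponent $(n-2)/2$ in the statement, I would split on whether $\epsilon \leq 1/\sqrt{n}$ or $\epsilon > 1/\sqrt{n}$. In the small-$\epsilon$ regime the claimed upper bound $K\sqrt{n}(1-\epsilon^2)^{(n-2)/2}$ exceeds $1$ once $K$ is large, so the inequality is trivial. In the regime $\epsilon > 1/\sqrt{n}$ one has $1/\epsilon \leq \sqrt{n}$, and writing $(1-\epsilon^2)^{(n-1)/2} = \sqrt{1-\epsilon^2}\,(1-\epsilon^2)^{(n-2)/2}$ together with $\sqrt{1-\epsilon^2} \leq 1$ absorbs the extra factor of $\sqrt{n}/(n-1)$ and gives the target inequality after adjusting constants.

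Finally, the $O_P$ statement follows by taking $\epsilon_n = M\sqrt{\log n/n}$: for any fixed $M>1$,
\[
K\sqrt{n}(1-\epsilon_n^2)^{(n-2)/2} \leq K\sqrt{n}\exp\!\left(-\tfrac{1}{2}M^2(\log n)(1-2/n)\right) = K\,n^{(1-M^2)/2 + o(1)} \to 0.
\]
The only genuine obstacle is the gamma-ratio estimate, which is a classical Stirling computation; the split into two ranges of $\epsilon$ is bookkeeping needed to squeeze out the exponent claimed in the statement rather than the slightly weaker $(n-3)/2$ that comes from the naive monotonicity bound.
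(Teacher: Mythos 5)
Your proof is correct, and it is worth noting that the paper's own ``proof'' of Lemma~\ref{lem:orthog} is a one-line citation to Proposition~1 of Cai, Fan, and Jiang \cite{cai2013distributions}; you have instead given a self-contained derivation from Lemma~\ref{lem:pearson} (the spherical density), which is what that reference proves. Your two key moves are sound: the substitution trick of inserting $\rho/\epsilon \ge 1$ into the integrand evaluates the tail integral exactly as $(1-\epsilon^2)^{(n-1)/2}/(\epsilon(n-1))$, and the Wendel/Gautschi bound $\Gamma(n/2)/\Gamma((n-1)/2) \le \sqrt{(n-1)/2} \le \sqrt{n}$ is a standard non-asymptotic estimate. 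The case split is exactly the right bookkeeping: for $\epsilon \le 1/\sqrt{n}$ one has $(1-\epsilon^2)^{(n-2)/2} \ge (1-1/n)^{(n-2)/2} \ge e^{-1/2}$ so the claimed bound exceeds $1$ once $K$ is large enough; and for $\epsilon > 1/\sqrt{n}$ one has $1/\epsilon \le \sqrt{n}$, $(1-\epsilon^2)^{(n-1)/2} \le (1-\epsilon^2)^{(n-2)/2}$, and $n/(n-1) \le 2$, so the $\sqrt{n}$ from the gamma ratio cancels against the $1/(\epsilon(n-1))$ and you in fact get a bound of order $(1-\epsilon^2)^{(n-2)/2}$ with no $\sqrt{n}$ factor in that regime. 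The $O_P(\sqrt{\log n / n})$ conclusion follows as you state. What the explicit derivation buys is a self-contained argument tied directly to the paper's Lemma~\ref{lem:pearson} rather than an external citation; there is no real mathematical difference, since the cited proposition is essentially this computation.

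One small caveat: the reduction ``it suffices to treat $\|x\|=1$'' is clean only when $\|x\|\le 1$. If $\|x\|>1$, the event rescales to $|\langle U, x/\|x\|\rangle| > \epsilon/\|x\|$ with a \emph{smaller} threshold, and the stated bound need not hold. The lemma (like Proposition~1 of \cite{cai2013distributions}) is implicitly about unit vectors, and in the paper's application it is always invoked with a unit vector and the norm absorbed into the threshold, so this is a matter of interpretation rather than a gap in your argument; it would be cleaner to state the reduction only for $\|x\| \le 1$.
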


\begin{proof}
This is a direct result from Proposition 1 in \cite{cai2013distributions}.
\end{proof}

\section*{Acknowledgements}
Research supported in part by NSF grants IIS-1116730, 
AFOSR grant FA9550-09-1-0373, ONR grant
N000141210762, and an Amazon AWS in Education Machine Learning
Research grant.   

The authors thank 
Andrew Barron, John Duchi, and Alfred Hero for valuable comments on
this work.


\end{document}